\numberwithin{equation}{section} \overfullrule 5pt
\newtheorem{thm}{Theorem}[section]
\newtheorem{cor}[thm]{Corollary}
\newtheorem{lem}[thm]{Lemma}
\theoremstyle{definition}
\title{New hook-content formulas for strict partitions}
\author[Guo-Niu Han]{Guo-Niu Han}
\author[Huan Xiong]{Huan Xiong$^*$}
\address{Universit\'e de Strasbourg, CNRS, IRMA UMR 7501, F-67000 Strasbourg, France}
\email{guoniu.han@unistra.fr, \quad xiong@math.unistra.fr}
\subjclass[2010]{05A15, 05A17, 05A19,  05E05, 05E10, 11P81}
\keywords{strict partition, hook length, content, shifted 
Young tableau, difference operator}
\thanks{An extended abstract for this paper appeared in the Proceedings of the FPSAC'16 conference.}
\thanks{$^*$ Huan Xiong is the corresponding author.}
\begin{document}
\begin{abstract} 
We introduce the difference operator for functions defined on strict partitions and prove a polynomiality property for a summation involving the hook length and content statistics. As an application, several new hook-content formulas for strict partitions are derived.
\end{abstract}
\maketitle

\section{Introduction} \label{sec:introduction}  


The basic knowledge on partitions,  Young
tableaux and symmetric functions could be found in \cite{ec2}.  In this paper, we focus on strict partitions.  A \emph{strict partition} is a finite strict decreasing sequence of
positive integers $\lambda = (\lambda_1, \lambda_2, \ldots,
\lambda_\ell)$. The integer  $|\lambda|=\sum_{1\leq i\leq
\ell}\lambda_i$ is called the \emph{size} of the
partition $\lambda$ and $\ell(\lambda)=\ell$ is called the {\it length} of $\lambda.$ For convenience, let $\lambda_i=0$ for $i>l(\lambda)$. A strict partition $\lambda$ can be identified with its shifted Young diagram, which means that the $i$-th row of the usual Young diagram is shifted to the right by $i$ boxes. Therefore the leftmost box in the $i$-th row has coordinate $(i,i+1)$. For the $(i, j)$-box in the shifted Young diagram of the strict partition $\lambda$, we can associate its \emph{hook length},
denoted by $h_{(i, j)}$, which is the number of boxes exactly to the
right, or  exactly above, or the box itself, plus~$\lambda_{j}$.
For example, consider the box $\square=(i,j)=(1,3)$ in the shifted Young diagram of the strict partition $(7,5,4,1)$.
There are 1 and 5 boxes above and to the right of the box $\square$ respectively. Since $\lambda_3=4$, the hook length of $\square$ is equal to $1+5+1+4=11$, as illustrated in Figure \ref{fig:1}.
The {\it content} of $\square=(i,j)$ is defined to be $c_\square=j-i$,
so that the leftmost box in each row has content $1$.
Also, 
let $\mathcal{H}(\lambda)$ be the multi-set of hook lengths of boxes
and
$H_{\lambda}$ be the product of all hook lengths of boxes in~$\lambda$.
\medskip

Our goal is to find some hook-content formulas for {\it strict partitions},
by analogy with that for {\it ordinary partitions}. 
For the ordinary partition $\nu$, it is well known that (see \cite{han2, rsk, ec2})
\begin{equation}
\label{eq:hookformula} f_\nu = \frac{|\nu| !}{H_{\nu}}
\qquad \text{and}\qquad \frac{1}{n!}\sum_{|\nu| =n}
f_\nu^2=1,
\end{equation}
where $H_{\nu}$ denotes the product of all hook lengths of boxes in $\nu$ and $f_{\nu}$ denotes the number of standard Young tableaux of shape
$\nu$.
The
first author conjectured \cite{han} that
$$
P(n)=\frac{1}{n!}\sum_{|\nu|= n}
f_\nu^2\sum_{\square\in\nu}h_{\square}^{2k} $$
is always a
polynomial in $n$ for all $k\in \mathbb{N}$, which was generalized and
proved by  Stanley \cite{stan}, and later generalized in \cite{hanxiong}  (see also   \cite{hanxiong2,han3,han4,hanxiong1,no,ols,ols2,panova}).

\begin{figure}
\centering
\begin{center}
\includegraphics[]{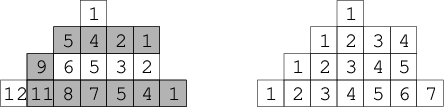}
\end{center}
\caption{The shifted Young diagram of the strict partition $(7,5,4,1)$ with its hook
lengths and contents.}\label{fig:1}
\end{figure}

\medskip

For two strict partitions $\lambda$ and $\mu$, we write $\lambda \supseteq \mu$
if $\lambda_i\geq \mu_i$ for all $i\geq 1$.
In this case,  the skew strict partition $\lambda/\mu$ can be identified with its
skew shifted Young diagram. For example, the skew strict partition $(7,5,4,1)/(4,2,1)$ is represented by the white boxes in Figure \ref{fig:2}.
Let $f_\lambda$ (resp.
$f_{\lambda/\mu}$) be the number of standard shifted Young tableaux of shape
$\lambda$ (resp. $\lambda/\mu$).  The following are well-known formulas   (see \cite{bandlow, schur,  thr}) analogous to \eqref{eq:hookformula}:
\begin{equation}
\label{eq:hookformula*} f_\lambda = \frac{|\lambda| !}{H_{\lambda}}
\qquad \text{and}\qquad \frac{1}{n!}\sum_{|\lambda| =n}
2^{n-\ell(\lambda)}f_\lambda^2=1.
\end{equation}

\begin{figure}
\centering
\begin{center}
\includegraphics[]{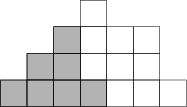}
\end{center}
\caption{The skew shifted Young diagram of the skew strict partition $(7,5,4,1)/(4,2,1)$.}\label{fig:2}
\end{figure}


In this paper, we generalize the latter equality of \eqref{eq:hookformula*}
by means of the following results.

\begin{thm} \label{th:polynomial:noq}
Suppose that $Q$ is a given symmetric function, and $\mu$ is a given strict partition. Then
\begin{align*}
P(n)=\sum_{|\lambda/\mu|=n}\frac{2^{|\lambda|-|\mu|-\ell(\lambda)+\ell(\mu)}f_{\lambda/\mu}}{H_{\lambda}}Q\left(\binom{c_{\square}}{2}:
{\square}\in\lambda\right)
\end{align*}
is a polynomial in $n$, where $Q(\binom{c_{\square}}{2}: {\square}\in\lambda)$
 means that $|\lambda|$ of
 the variables are substituted by $\binom{c_\square}{2}$ 
for $\square\in \lambda$, and all other variables by~$0$.
\end{thm}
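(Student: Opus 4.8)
The strategy is to introduce a difference operator $\mathcal{D}$ acting on functions of strict partitions, to express $P(n)$ through iterates of $\mathcal{D}$, and then to show those iterates are polynomial in $n$. Write $f'_{\lambda/\mu}=2^{|\lambda|-|\mu|-\ell(\lambda)+\ell(\mu)}f_{\lambda/\mu}$ and $g(\lambda)=Q\bigl(\binom{c_\square}{2}:\square\in\lambda\bigr)$, so that $P(n)=\sum_{|\lambda/\mu|=n}\frac{f'_{\lambda/\mu}}{H_\lambda}g(\lambda)$. The branching rule for shifted standard tableaux, expressed for $f'$, reads $f'_{\lambda/\mu}=\sum_{\nu}\varepsilon_{\lambda,\nu}f'_{\nu/\mu}$, where $\nu$ ranges over the strict partitions obtained from $\lambda$ by deleting a single removable box not lying in $\mu$, with $\varepsilon_{\lambda,\nu}=1$ when that box is the only box of its row and $\varepsilon_{\lambda,\nu}=2$ otherwise. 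Substituting this into $P(n)$ and exchanging the order of summation (the outer sum now over $\nu$ with $|\nu/\mu|=n-1$, the inner sum over the boxes one may add to $\nu$) gives $P(n)=\sum_{|\nu/\mu|=n-1}\frac{f'_{\nu/\mu}}{H_\nu}(\mathcal{D}g)(\nu)$, where for any function $g$ on strict partitions
\[
(\mathcal{D}g)(\lambda):=\sum_{i}\varepsilon_{i}\,\frac{H_{\lambda}}{H_{\lambda^{i+}}}\,g(\lambda^{i+}),
\]
the $\lambda^{i+}$ being the one-box enlargements of $\lambda$ and $\varepsilon_i\in\{1,2\}$ recording whether the added box begins a new row; note that $\mathcal{D}$ does not depend on $\mu$. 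Iterating this identity and using $f'_{\mu/\mu}=1$ yields
\[
P(n)=\frac{(\mathcal{D}^{\,n}g)(\mu)}{H_\mu},
\]
and it remains to prove that $(\mathcal{D}^{\,n}g)(\mu)$ is a polynomial in $n$. (For $g\equiv1$ the statement amounts to $\mathcal{D}(1)=1$, which for $\mu=\emptyset$ is the second formula of \eqref{eq:hookformula*} and in general is the local hook-length identity $\sum_i\varepsilon_iH_\lambda/H_{\lambda^{i+}}=1$.)

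Since $I$ and $\mathcal{D}-I$ commute, $\mathcal{D}^{\,n}=\sum_{k=0}^{n}\binom{n}{k}(\mathcal{D}-I)^{k}$; hence if $g$ lies in a finite-dimensional $\mathcal{D}$-stable space $V$ on which $\mathcal{D}-I$ is nilpotent, the sum over $k$ truncates at a bounded index and $(\mathcal{D}^{\,n}g)(\mu)=\sum_{k}\binom{n}{k}\bigl((\mathcal{D}-I)^{k}g\bigr)(\mu)$ is visibly a polynomial in $n$. Such a $V$ is furnished by the algebra generated by the function $\lambda\mapsto|\lambda|$ and the content power sums $\lambda\mapsto\sum_{\square\in\lambda}\binom{c_\square}{2}^{j}$ $(j\ge1)$: filter it by the weight assigning $|\lambda|$ weight $1$ and $\sum_\square\binom{c_\square}{2}^{j}$ weight $j+1$, and let $V_{\le N}$ be the span of the products of total weight at most $N$. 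Equivalently, $V_{\le N}$ is spanned by the functions $q_\nu(\lambda)=\prod_j\bigl(\sum_i\binom{x_i}{2}^{\nu_j}-\sum_i\binom{y_i}{2}^{\nu_j}\bigr)$ with $|\nu|\le N$, graded by $|\nu|$, where $x_i$ and $y_i$ are the contents of the inner and outer corners of $\lambda$. Each $V_{\le N}$ is finite-dimensional, and $g$, being a polynomial in the content power sums, lies in some $V_{\le N}$ with $N=N(Q)$; take $V=V_{\le N}$.

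The heart of the proof — and the step I expect to be the main obstacle — is the claim that $\mathcal{D}$ preserves every $V_{\le N}$ and that $\mathcal{D}-I$ strictly lowers the weight; this forces $\mathcal{D}-I$ to be nilpotent on the finite-dimensional $V_{\le N}$ and finishes the argument. To prove the claim, observe that the generators change from $\lambda$ to $\lambda^{i+}$ by $|\lambda|\mapsto|\lambda|+1$ and $\sum_\square\binom{c_\square}{2}^{j}\mapsto\sum_\square\binom{c_\square}{2}^{j}+\binom{x_i}{2}^{j}$, so expanding $h(\lambda^{i+})$ for $h\in V_{\le N}$ gives $h(\lambda^{i+})=\sum_{s\ge0}h_s(\lambda)\binom{x_i}{2}^{s}$, with $h_0$ equal to $h$ after replacing $|\lambda|$ by $|\lambda|+1$ (so $h_0-h$ has weight $<N$) and each $h_s$ $(s\ge1)$ of weight $\le N-s-1$. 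Therefore
\[
(\mathcal{D}h)(\lambda)=\sum_{s\ge0}h_s(\lambda)\,\psi_s(\lambda),\qquad\psi_s(\lambda):=\sum_i\varepsilon_i\,\frac{H_\lambda}{H_{\lambda^{i+}}}\,\binom{x_i}{2}^{s},
\]
and since $\psi_0=\mathcal{D}(1)=1$ we obtain $(\mathcal{D}h)-h=(h_0-h)+\sum_{s\ge1}h_s\psi_s$. It thus suffices to show that each $\psi_s$ lies in $V$ with weight at most $s$, for then $h_s\psi_s$ has weight $\le(N-s-1)+s<N$. Proving this requires three ingredients: (i) the explicit formula for the hook-product ratio $H_{\lambda^{i+}}/H_\lambda$ upon adding a box to a shifted diagram, written through the differences $\binom{x_a}{2}-\binom{x_b}{2}$ and $\binom{x_a}{2}-\binom{y_b}{2}$ of corner contents; (ii) a residue (Lagrange interpolation) computation which, using (i), identifies $\sum_i\varepsilon_i\frac{H_\lambda}{H_{\lambda^{i+}}}\varphi\bigl(\binom{x_i}{2}\bigr)$, for a polynomial $\varphi$, with minus the residue at infinity of an explicit rational function in $z$ formed from the corner contents, hence with a polynomial in the sums $\sum_a\binom{x_a}{2}^{j}-\sum_b\binom{y_b}{2}^{j}$ for $j\le\deg\varphi$ (the case $\varphi\equiv1$ recovering $\mathcal{D}(1)=1$); and (iii) the interlacing relation between the corner contents $x_i,y_i$ and the full content multiset of $\lambda$, which rewrites each such power sum, and thus each $\psi_s$, as an element of $V$ of the asserted weight. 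Assembling (i)--(iii) establishes the claim and completes the proof; the substantial work lies in pinning down the precise shifted corner formula and carrying out the residue expansion, and it is natural to run the whole induction at the more general level of $\lambda\mapsto q_\nu(\lambda)\,Q(\binom{c_\square}{2}:\square\in\lambda)$, of which the statement is the case $\nu=\emptyset$.
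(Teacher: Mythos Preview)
Your proposal is correct and follows essentially the same route as the paper: your operator $\mathcal{D}$ is simply the paper's $D+I$ conjugated by the factor $1/H_\lambda$ (so that $(\mathcal{D}g)(\mu)/H_\mu=(D+I)(g/H_\lambda)\big|_{\lambda=\mu}$), and your ingredients (i)--(iii) are precisely the paper's Theorem~3.1, Theorem~3.2, and Theorems~3.5/3.6 respectively. The paper does not prove your ``equivalently'' assertion that the content--power-sum filtration and the $q_\nu$ filtration coincide; instead it sidesteps this by running the induction directly on the mixed functions $q_\nu(\lambda)\,Q\bigl(\binom{c_\square}{2}:\square\in\lambda\bigr)$ (its Theorem~4.1), exactly the generalization you suggest in your last sentence.
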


\begin{thm} \label{th:polynomial*}
Suppose that $k$ is a given nonnegative integer. Then
\begin{align*}
\sum_{|\lambda|=n}\frac{2^{|\lambda|-\ell(\lambda)}f_{\lambda}}{H_{\lambda}}\sum_{\square\in\lambda}
\binom{c_{\square}+k-1}{2k}=\frac{2^k}{(k+1)!}\binom{n}{k+1}.
\end{align*}
\end{thm}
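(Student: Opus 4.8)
The plan is to deduce this identity from Theorem~\ref{th:polynomial:noq} taken with $\mu=\emptyset$, so that the weight becomes $2^{|\lambda|-\ell(\lambda)}f_\lambda$. The first step is to rewrite the inner summand as a polynomial in $\binom{c_\square}{2}$. Starting from $\binom{c+k-1}{2k}=\frac{1}{(2k)!}(c-k)(c-k+1)\cdots(c+k-1)$ and grouping these $2k$ consecutive factors into the $k$ pairs $(c-j)(c+j-1)=2\binom{c}{2}-2\binom{j}{2}$, $j=1,\dots,k$, one obtains
\begin{equation*}
\binom{c+k-1}{2k}=\frac{2^{k}}{(2k)!}\prod_{j=1}^{k}\Bigl(\binom{c}{2}-\binom{j}{2}\Bigr),
\end{equation*}
which is a polynomial in $\binom{c}{2}$ with vanishing constant term (the factor for $j=1$ being $\binom{c}{2}$ itself), say $\sum_{j=1}^{k}b_j\binom{c}{2}^{j}$ with $b_k=2^{k}/(2k)!$. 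Hence $\sum_{\square\in\lambda}\binom{c_\square+k-1}{2k}=Q\bigl(\binom{c_\square}{2}:\square\in\lambda\bigr)$ for the symmetric function $Q=\sum_{j=1}^{k}b_j\,p_j$, a rational combination of power sums. By Theorem~\ref{th:polynomial:noq} the left-hand side, call it $P(n)$, is a polynomial in $n$; I moreover expect the argument behind that theorem to furnish the degree bound $\deg P\le k+1$ (roughly, each power sum $p_j$ contributing degree at most $j+1$).

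The second step is to evaluate $P$ at $n=0,1,\dots,k+1$. The key elementary fact is that every box of a strict partition $\lambda$ has content in $\{1,2,\dots,\lambda_1\}$, hence $1\le c_\square\le|\lambda|$. Therefore, if $|\lambda|=n\le k$ then $0\le c_\square+k-1\le 2k-1<2k$ for every box, so $\binom{c_\square+k-1}{2k}=0$ and $P(n)=0$. For $n=k+1$, a box contributes a nonzero term only when $c_\square+k-1\ge 2k$, i.e.\ $c_\square\ge k+1$; as $c_\square\le\lambda_1\le|\lambda|=k+1$, this forces $\lambda=(k+1)$ and $c_\square=k+1$, and the unique such box contributes $\binom{2k}{2k}=1$. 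Since $\ell\bigl((k+1)\bigr)=1$, $H_{(k+1)}=(k+1)!$ and $f_{(k+1)}=1$, we get $P(k+1)=2^{k}/(k+1)!$.

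To finish, observe that $\frac{2^{k}}{(k+1)!}\binom{n}{k+1}$ is a polynomial of degree $k+1$ that vanishes at $n=0,1,\dots,k$ and equals $\frac{2^{k}}{(k+1)!}$ at $n=k+1$. Thus $P(n)$ and $\frac{2^{k}}{(k+1)!}\binom{n}{k+1}$ are polynomials of degree at most $k+1$ agreeing at the $k+2$ points $0,1,\dots,k+1$, so they are identically equal, which is the assertion.

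The only ingredient that is not a routine computation is the degree bound $\deg P\le k+1$. If the version of Theorem~\ref{th:polynomial:noq} established in the body of the paper is stated only qualitatively, this bound must be read off its proof — checking that the summation attached to a single power sum $p_j$ has degree at most $j+1$ in $n$. That is the step I would handle with the most care; the factorization of $\binom{c+k-1}{2k}$, the content bound $1\le c_\square\le|\lambda|$, and the evaluation $P(k+1)=2^{k}/(k+1)!$ are all short.
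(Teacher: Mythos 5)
Your proposal is correct and follows essentially the same route as the paper's proof: the same factorization $(2k)!\binom{c+k-1}{2k}=2^k\prod_{j=1}^{k}\bigl(\binom{c}{2}-\binom{j}{2}\bigr)$, the same vanishing at $n=0,1,\dots,k$, the same evaluation $P(k+1)=2^k/(k+1)!$ coming from the unique one-row partition, and interpolation at $k+2$ points. The degree bound $\deg P\le k+1$ that you defer is supplied exactly as you anticipate: one application of $D$ to $\sum_{\square}\binom{c_\square}{2}^{k}/H_\lambda$ yields $\sum_{|\nu|\le k}\xi_\nu q_\nu(\lambda)/H_\lambda$ by Theorem \ref{th:aibi}, each further application lowers $|\nu|$ by at least one by Theorem \ref{th:diffq2}, so $D^{k+2}$ annihilates the summand and Theorem \ref{th:telescope} gives the required bound.
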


When $k=0$ we derive the latter identity of \eqref{eq:hookformula*}. When $k=1$, Theorem \ref{th:polynomial*} becomes 
\begin{align*}
\sum_{|\lambda|=n}\frac{2^{|\lambda|-\ell(\lambda)}f_{\lambda}}{H_{\lambda}}\sum_{\square\in\lambda}
\binom{c_{\square}}{2}=\binom{n}{2},
\end{align*}
which could also be obtained by setting $\mu=\emptyset$ in the next theorem. 

\begin{thm} \label{th:k=1}
Let $\mu$ be a strict partition. Then
\begin{equation}\label{eq:sumc:k=1}
\sum_{|\lambda/\mu|=n}\frac{2^{|\lambda|-\ell(\lambda)-|\mu|+\ell(\mu)}f_{\lambda/\mu}H_\mu}{H_{\lambda}}\left(\sum_{\square\in\lambda}\binom{c_{\square}}{2}-\sum_{\square\in\mu}
\binom{c_{\square}}{2}\right)
=\binom{n}{2}+n|\mu|.
\end{equation}
\end{thm}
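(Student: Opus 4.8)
Denote by $S(n)$ the left-hand side of \eqref{eq:sumc:k=1}. The plan is to run the difference-operator machinery of the paper on the single function $g(\lambda):=\sum_{\square\in\lambda}\binom{c_\square}{2}$ and to show that its discrete derivatives vanish after two steps. Since $g(\lambda)-g(\mu)=\sum_{\square\in\lambda/\mu}\binom{c_\square}{2}$ and $2^{|\lambda|-\ell(\lambda)-|\mu|+\ell(\mu)}f_{\lambda/\mu}=f'_{\lambda/\mu}$, put
\[
\Phi_n(h)(\mu):=H_\mu\sum_{|\lambda/\mu|=n}\frac{f'_{\lambda/\mu}}{H_\lambda}\,h(\lambda),
\]
so that $\Phi_0=I$ and $S(n)=\Phi_n(g)(\mu)-g(\mu)\,\Phi_n(\mathbf{1})(\mu)$, where $\mathbf{1}$ is the constant function $1$. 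From the branching rule $f_{\lambda/\mu}=\sum_{\square}f_{\lambda/(\mu\cup\square)}$ for skew standard shifted tableaux (sum over addable boxes $\square$ of $\mu$ with $\mu\cup\square\subseteq\lambda$) one obtains the recursion $\Phi_{n+1}(h)=T\Phi_n(h)$, where
\[
(Th)(\mu):=\sum_{\square\ \text{addable}}\frac{2^{\,\epsilon(\square)}H_\mu}{H_{\mu\cup\square}}\,h(\mu\cup\square),\qquad \epsilon(\square)=1-\bigl(\ell(\mu\cup\square)-\ell(\mu)\bigr)\in\{0,1\}.
\]
Hence $\Phi_n=T^{n}$, and writing $\mathcal{D}:=T-I$ gives $\Phi_n(h)=\sum_{j\ge 0}\binom{n}{j}\,\mathcal{D}^{j}h$. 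Granting $T\mathbf{1}=\mathbf{1}$ (established below; for $\mu=\emptyset$ it is equivalent to \eqref{eq:hookformula*}), so that $\Phi_n(\mathbf{1})\equiv\mathbf{1}$, the theorem reduces to the three evaluations $(\mathcal{D}g)(\mu)=|\mu|$, $(\mathcal{D}^{2}g)(\mu)=1$ and $(\mathcal{D}^{j}g)(\mu)=0$ for $j\ge 3$, since then $S(n)=\sum_{j\ge 1}\binom{n}{j}(\mathcal{D}^{j}g)(\mu)=\binom{n}{1}|\mu|+\binom{n}{2}=\binom{n}{2}+n|\mu|$.

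The core is the computation of $\mathcal{D}g$. Using $T\mathbf{1}=\mathbf{1}$ we get $\sum_{\square}\frac{2^{\epsilon(\square)}H_\mu}{H_{\mu\cup\square}}=1$, so $\mathcal{D}$ annihilates constants and $(\mathcal{D}h)(\mu)=\sum_{\square}\frac{2^{\epsilon(\square)}H_\mu}{H_{\mu\cup\square}}\bigl(h(\mu\cup\square)-h(\mu)\bigr)$; in particular $(\mathcal{D}g)(\mu)=\sum_{\square}\frac{2^{\epsilon(\square)}H_\mu}{H_{\mu\cup\square}}\binom{c_\square}{2}$, a finite sum over the addable boxes of $\mu$. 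Parametrize the relevant contents by $x_0=1,x_1,\dots,x_m$ (inner corners) and $y_1,\dots,y_m$ (outer corners) as in the paper, and set $X_i=\binom{x_i}{2}$, $Y_j=\binom{y_j}{2}$. The needed local input is the shifted hook-length-ratio identity
\[
\frac{2^{\,\epsilon(\square_i)}H_\mu}{H_{\mu^{i+}}}=\frac{\prod_{j=1}^{m}(X_i-Y_j)}{\prod_{0\le j\le m,\ j\ne i}(X_i-X_j)}.
\]
Granting it, comparing leading coefficients in the Lagrange interpolation of $\prod_{j=1}^{m}(t-Y_j)$ at the $m+1$ nodes $X_0,\dots,X_m$ gives $\sum_{i}\frac{\prod_j(X_i-Y_j)}{\prod_{j\ne i}(X_i-X_j)}=1$, which is precisely $T\mathbf{1}=\mathbf{1}$; and $(\mathcal{D}g)(\mu)=\sum_{i}X_i\,\frac{\prod_j(X_i-Y_j)}{\prod_{j\ne i}(X_i-X_j)}$ is the divided difference over those same nodes of the degree-$(m+1)$ polynomial $t\prod_{j=1}^{m}(t-Y_j)$, hence equals $\sum_{i=0}^{m}X_i-\sum_{j=1}^{m}Y_j$. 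It then remains to prove the corner identity $\sum_{i=0}^{m}\binom{x_i}{2}-\sum_{j=1}^{m}\binom{y_j}{2}=|\mu|$ for every strict partition $\mu$, which one checks increases by exactly $1$ under adjoining a single box and vanishes at $\mu=\emptyset$. This yields $(\mathcal{D}g)(\mu)=|\mu|$.

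The remaining evaluations are then immediate: since $\mathcal{D}g$ is the size function $\mu\mapsto|\mu|$, we get $(\mathcal{D}^{2}g)(\mu)=\sum_{\square}\frac{2^{\epsilon(\square)}H_\mu}{H_{\mu\cup\square}}\bigl(|\mu\cup\square|-|\mu|\bigr)=\sum_{\square}\frac{2^{\epsilon(\square)}H_\mu}{H_{\mu\cup\square}}=1$, and then $\mathcal{D}^{j}g=\mathcal{D}^{\,j-2}(\mathbf{1})=0$ for $j\ge 3$ because $\mathcal{D}$ kills constants. Substituting into $S(n)=\sum_{j\ge 1}\binom{n}{j}(\mathcal{D}^{j}g)(\mu)$ gives \eqref{eq:sumc:k=1}.

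The main obstacle is the local step of the previous paragraph: the hook-length-ratio identity for $H_\mu/H_{\mu^{i+}}$ in terms of corner contents, and the evaluation $\sum_{i}\binom{x_i}{2}-\sum_{j}\binom{y_j}{2}=|\mu|$. These are exactly the points where shifted hook lengths, with their staircase shift and the extra ``$+\lambda_j$'' term, behave differently from classical ones, and I expect to handle them by the $x_i$/$y_j$ corner bookkeeping and a partial-fraction (residue) computation, in the spirit of the proof of Theorem \ref{th:polynomial:noq}. As a consistency check, Theorem \ref{th:polynomial:noq} applied with $Q=p_1$ and with $Q=1$ already guarantees that $\Phi_n(g)(\mu)$ and $\Phi_n(\mathbf{1})(\mu)$, hence $S(n)$, are polynomials in $n$; the argument above pins down that polynomial and, incidentally, shows it has degree exactly $2$.
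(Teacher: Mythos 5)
Your proposal is correct and follows essentially the same route as the paper: your operator $\mathcal{D}$ is the paper's $D$ conjugated by $H_\lambda$, and the three evaluations $(\mathcal{D}g)(\mu)=q_1(\mu)=|\mu|$, $(\mathcal{D}^2g)(\mu)=1$, $(\mathcal{D}^jg)(\mu)=0$ for $j\geq 3$, combined with the binomial expansion of $\Phi_n$, reproduce exactly the computation $H_\lambda D\bigl(\sum_\square\binom{c_\square}{2}/H_\lambda\bigr)=|\lambda|$ and the appeal to Theorem \ref{th:telescope} in the paper's proof. The only substantive ingredient you defer, the hook-length-ratio identity, is precisely Theorem \ref{th:Hlambda}, which the paper establishes separately and cites at this point, and your divided-difference derivation of the Lagrange identities and your inductive proof of $q_1(\mu)=|\mu|$ are valid minor variants of \eqref{eq:ab1} and Lemma \ref{th:q1}.
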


The proofs of those theorems are given in Section 4, by using the difference operator technique.


\section{Difference operators} 
For each strict partition $\lambda$, the symbol $\lambda^+$ (resp. $\lambda^-$)
always represents a strict partition obtained by adding (resp. removing) a box to (resp. from) $\lambda$. In other words, $|\lambda^+/\lambda|=1$ and 
$|\lambda/\lambda^-|=1$.
By analogy with the difference operator for ordinary partitions introduced in \cite{hanxiong}, we define
{\it the difference operator for strict partitions}  by
$$
D\left(g(\lambda)\right):=\sum\limits_{\ell(\lambda^+)>\ell(\lambda) }g(\lambda^+)+2\sum\limits_{\ell(\lambda^+)=\ell(\lambda)}g(\lambda^+)\ -\ g(\lambda),
$$
where $\lambda$ and $\lambda^+$ are strict partitions and $g$ is a function on strict partitions. 
Notice that
$ \#\{\lambda^+:\ell(\lambda^+)>\ell(\lambda) \}=0\ \text{or} \ 1.$

\medskip

For each skew strict partition $\lambda/\mu$, let
${f'}_{\lambda/\mu}:=2^{|\lambda|-|\mu|-\ell(\lambda)+\ell(\mu)}f_{\lambda/\mu}.$

\begin{lem} \label{th:f'}
For two different strict partitions $\lambda\supseteq\mu$ we have
\begin{align*}
{f'}_{\lambda/\mu}=\sum\limits_{\lambda^-:\substack{\lambda\supseteq
\lambda^-\supseteq \mu \\ 
\ell(\lambda^-)<\ell(\lambda) }}{f'}_{\lambda^-/\mu}
+2\sum\limits_{\lambda^-:\substack{\lambda\supseteq
\lambda^-\supseteq \mu \\ 
\ell(\lambda^-)=\ell(\lambda) }}{f'}_{\lambda^-/\mu}.
\end{align*}
\end{lem}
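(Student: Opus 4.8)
The plan is to prove this recursion combinatorially by examining the last step in a standard shifted Young tableau of shape $\lambda/\mu$. Recall that such a tableau is a filling of the boxes of $\lambda/\mu$ with the integers $1,2,\dots,|\lambda/\mu|$, increasing along rows and columns; the entry $|\lambda/\mu|$ must sit in a box $\square$ that is a removable corner of $\lambda$ relative to $\mu$, i.e.\ deleting $\square$ leaves a shifted diagram $\lambda^-$ with $\lambda\supseteq\lambda^-\supseteq\mu$. Hence $f_{\lambda/\mu}=\sum_{\lambda^-}f_{\lambda^-/\mu}$, the sum ranging over all such $\lambda^-$. This is the unweighted recursion; the real content of the lemma is bookkeeping the power of $2$ in ${f'}_{\lambda/\mu}=2^{|\lambda|-|\mu|-\ell(\lambda)+\ell(\mu)}f_{\lambda/\mu}$.

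First I would fix a removable corner $\square$ of $\lambda$ (relative to $\mu$) and compare the exponent $e(\lambda):=|\lambda|-|\mu|-\ell(\lambda)+\ell(\mu)$ with $e(\lambda^-)$, where $\lambda^-=\lambda\setminus\{\square\}$. Since $|\lambda|-|\mu|$ drops by exactly $1$, we get $e(\lambda)-e(\lambda^-)=1-(\ell(\lambda)-\ell(\lambda^-))$. There are exactly two cases: either $\square$ lies in a row of length $\geq 2$ (equivalently $\square$ is not the unique box of its row), in which case $\ell(\lambda^-)=\ell(\lambda)$ and $e(\lambda)-e(\lambda^-)=1$; or $\square$ is the sole box of the last row of $\lambda$, in which case $\ell(\lambda^-)=\ell(\lambda)-1$ and $e(\lambda)-e(\lambda^-)=0$. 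Translating through the factor $2^{e(\cdot)}$: in the first case ${f'}_{\lambda^-/\mu}$ carries a relative factor $2^{-1}$ against ${f'}_{\lambda/\mu}$, so the contribution $f_{\lambda^-/\mu}$ to $f_{\lambda/\mu}$ becomes a contribution $2\,{f'}_{\lambda^-/\mu}$ to ${f'}_{\lambda/\mu}$; in the second case the exponents agree and $f_{\lambda^-/\mu}$ contributes $1\cdot{f'}_{\lambda^-/\mu}$.

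Next I would match these two cases against the two sums in the statement. The condition $\ell(\lambda^-)=\ell(\lambda)$ is exactly ``$\square$ is not the last box of the last row,'' giving the coefficient $2$, while $\ell(\lambda^-)<\ell(\lambda)$ forces $\ell(\lambda^-)=\ell(\lambda)-1$ and corresponds to removing the singleton last row, giving coefficient $1$. Summing the identity $f_{\lambda/\mu}=\sum_{\lambda^-}f_{\lambda^-/\mu}$ after multiplying through by $2^{e(\lambda)}$ and distributing the factor into each term according to which case $\lambda^-$ falls in yields precisely
\[
{f'}_{\lambda/\mu}=\sum_{\substack{\lambda^-:\ \lambda\supseteq\lambda^-\supseteq\mu\\ \ell(\lambda^-)<\ell(\lambda)}}{f'}_{\lambda^-/\mu}+2\sum_{\substack{\lambda^-:\ \lambda\supseteq\lambda^-\supseteq\mu\\ \ell(\lambda^-)=\ell(\lambda)}}{f'}_{\lambda^-/\mu}.
\]
The one genuinely delicate point — the main obstacle — is making sure the classification of removable corners into ``shrinks the length'' versus ``keeps the length'' is exhaustive and that no corner is double-counted; this requires a careful look at the shifted-diagram geometry (the staircase shape along the diagonal means the last row can have length $1$, and then its unique box is simultaneously a removable corner and the place where $\ell$ drops). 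I would also need to confirm that when $\lambda^-=\mu$ is allowed the degenerate conventions ($f_{\mu/\mu}=1$, empty shape) are consistent, and that the hypothesis $\lambda\neq\mu$ guarantees at least one removable corner exists so the recursion is non-vacuous.
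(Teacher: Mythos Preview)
Your proposal is correct and follows essentially the same approach as the paper: start from the unweighted recursion $f_{\lambda/\mu}=\sum_{\lambda^-}f_{\lambda^-/\mu}$ coming from the position of the largest entry, then track how the exponent in $2^{|\lambda|-|\mu|-\ell(\lambda)+\ell(\mu)}$ changes according to whether $\ell(\lambda^-)=\ell(\lambda)$ or $\ell(\lambda^-)=\ell(\lambda)-1$. The paper's version is terser (it simply multiplies the unweighted recursion by $2^{|\lambda|-\ell(\lambda)}$ and reads off the two cases), but the substance is identical.
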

\begin{proof}
By the construction of standard shifted Young tableaux we have
\begin{align*}
{f}_{\lambda/\mu}=\sum\limits_{\lambda\supseteq
\lambda^-\supseteq \mu }{f}_{\lambda^-/\mu}
\end{align*}
and therefore 
\begin{align*}
2^{|\lambda|-\ell(\lambda)}{f}_{\lambda/\mu}=\sum\limits_{\lambda^-:\substack{\lambda\supseteq
\lambda^-\supseteq \mu \\ 
\ell(\lambda^-)<\ell(\lambda) }}2^{|\lambda^-|-\ell(\lambda^-)}{f}_{\lambda^-/\mu}
+2\sum\limits_{\lambda^-:\substack{\lambda\supseteq
\lambda^-\supseteq \mu \\ 
\ell(\lambda^-)=\ell(\lambda) }}2^{|\lambda^-|-\ell(\lambda^-)}{f}_{\lambda^-/\mu}.
\end{align*}
Then by the definition of ${f'}_{\lambda/\mu}$ we prove the claim.
\end{proof}

\begin{lem} \label{th:telescope*}
For each strict partition $\mu$ and each function  $g$ of strict
partitions, let
\begin{align*}
A(n)&:=\sum_{|\lambda/\mu|=n}{f'}_{\lambda/\mu}g(\lambda)\\
\noalign{\noindent and}
B(n)&:=\sum_{|\lambda/\mu|=n}{f'}_{\lambda/\mu}Dg(\lambda).\\
\noalign{\noindent Then} A(n)&=A(0)+\sum_{k=0}^{n-1}B(k).
\end{align*}
\end{lem}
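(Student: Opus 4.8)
The plan is to prove Lemma~\ref{th:telescope*} by a telescoping argument, using Lemma~\ref{th:f'} to rewrite $B(k)$ as a difference $A(k+1)-A(k)$. First I would expand $B(n)$ using the definition of the difference operator $D$:
\[
B(n)=\sum_{|\lambda/\mu|=n}{f'}_{\lambda/\mu}\Bigl(\sum_{\ell(\lambda^+)>\ell(\lambda)}g(\lambda^+)+2\sum_{\ell(\lambda^+)=\ell(\lambda)}g(\lambda^+)-g(\lambda)\Bigr).
\]
The last term contributes $-A(n)$ directly. For the first two sums, I would interchange the order of summation: instead of summing over $\lambda$ with $|\lambda/\mu|=n$ and then over boxes that can be added to $\lambda$, I would sum over all strict partitions $\nu$ with $|\nu/\mu|=n+1$ that contain $\mu$, and for each such $\nu$ collect the coefficient of $g(\nu)$ coming from all its predecessors $\lambda=\nu^-$. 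A partition $\nu$ arises as $\lambda^+$ from $\lambda=\nu^-$ exactly when $\nu\supseteq\lambda^-\supseteq\mu$ in the notation of Lemma~\ref{th:f'}, and the case $\ell(\lambda^+)>\ell(\lambda)$ (i.e. $\ell(\nu)>\ell(\nu^-)$) corresponds to the unweighted sum while $\ell(\lambda^+)=\ell(\lambda)$ corresponds to the weight-$2$ sum.

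The key point is that this reindexed double sum is precisely the right-hand side of Lemma~\ref{th:f'}: for each fixed $\nu$ with $|\nu/\mu|=n+1$, the coefficient of $g(\nu)$ is
\[
\sum_{\substack{\nu^-:\ \nu\supseteq\nu^-\supseteq\mu\\ \ell(\nu^-)<\ell(\nu)}}{f'}_{\nu^-/\mu}+2\sum_{\substack{\nu^-:\ \nu\supseteq\nu^-\supseteq\mu\\ \ell(\nu^-)=\ell(\nu)}}{f'}_{\nu^-/\mu}={f'}_{\nu/\mu}.
\]
Hence the first two sums in the expansion of $B(n)$ collapse to $\sum_{|\nu/\mu|=n+1}{f'}_{\nu/\mu}g(\nu)=A(n+1)$, giving $B(n)=A(n+1)-A(n)$. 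Summing this identity over $n=0,1,\dots,n-1$ telescopes to $A(n)-A(0)=\sum_{k=0}^{n-1}B(k)$, which is the claim.

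I expect the main obstacle to be the bookkeeping in the interchange of summation — specifically, making sure that every pair $(\lambda,\lambda^+)$ with $|\lambda/\mu|=n$ corresponds bijectively to a pair $(\nu^-,\nu)$ with $|\nu/\mu|=n+1$, and that the length condition ($\ell(\lambda^+)$ versus $\ell(\lambda)$) matches up correctly with the two sums in Lemma~\ref{th:f'} (equivalently, that adding a box to $\lambda$ increases the length iff removing the corresponding box from $\nu=\lambda^+$ decreases it, which is immediate but should be stated). One should also note the degenerate case where no box can be added that increases the length, consistent with the remark that $\#\{\lambda^+:\ell(\lambda^+)>\ell(\lambda)\}\in\{0,1\}$; this causes no issue since the corresponding sum is simply empty. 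Once the reindexing is set up cleanly, the rest is a one-line telescoping and requires no further computation.
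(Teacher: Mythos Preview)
Your proposal is correct and follows essentially the same approach as the paper: both arguments establish $B(n)=A(n+1)-A(n)$ by reindexing the double sum over $(\lambda,\lambda^+)$ as a sum over $(\nu^-,\nu)$ and invoking Lemma~\ref{th:f'}, then telescope. The only cosmetic difference is that the paper starts from $A(n+1)-A(n)$ and simplifies to $B(n)$, whereas you start from $B(n)$ and simplify to $A(n+1)-A(n)$.
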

\begin{proof}
We have
\begin{align*}
A(n+1)-A(n)\
&=\sum_{|\gamma/\mu|=n+1}{f'}_{\gamma/\mu}g(\gamma)-\sum_{|\lambda/\mu|=n}{f'}_{\lambda/\mu}g(\lambda)
\\&=\sum_{|\gamma/\mu|=n+1}\left(\sum\limits_{\gamma^-:\substack{\gamma\supseteq
\gamma^-\supseteq \mu \\ 
\ell(\gamma^-)<\ell(\gamma) }}{f'}_{\gamma^-/\mu}
+2\sum\limits_{\gamma^-:\substack{\gamma\supseteq
\gamma^-\supseteq \mu \\ 
\ell(\gamma^-)=\ell(\gamma)
}}{f'}_{\gamma^-/\mu}\right)g(\gamma)\\&\ \ \ \ -\sum_{|\lambda/\mu|=n}{f'}_{\lambda/\mu}g(\lambda)
\\&=\sum_{|\lambda/\mu|=n}{f'}_{\lambda/\mu}
\left(\sum\limits_{\ell(\lambda^+)>\ell(\lambda)}g(\lambda^+)+2\sum\limits_{\ell(\lambda^+)=\ell(\lambda)}g(\lambda^+)-g(\lambda)\right)
\\&=\sum_{|\lambda/\mu|=n}{f'}_{\lambda/\mu}Dg(\lambda)=B(n).
\end{align*}
Thus
\begin{align*}
    A(n+1)&=A(n)+B(n)\\
                &= A(n-1)+B(n-1)+B(n)\\
                &=\cdots \\
                &= A(0)+\sum_{k=0}^{n}B(k).\qedhere
\end{align*}
\end{proof}

\begin{thm}\label{th:telescope}
Let $g$ be a function on strict partitions and $\mu$ be a given
strict partition. Then we have
\begin{equation}\label{eq:telescope*}
\sum_{
|\lambda/\mu|=n}
2^{|\lambda|-|\mu|-\ell(\lambda)+\ell(\mu)}f_{\lambda/\mu}g(\lambda)
=\sum_{k=0}^n\binom{n}{k}D^kg(\mu)
\end{equation}
and
\begin{equation}\label{eq:telescope**}
D^ng(\mu)=\sum_{k=0}^n(-1)^{n+k}\binom{n}{k}
\sum_{|\lambda/\mu|=k}2^{|\lambda|-|\mu|-\ell(\lambda)+\ell(\mu)}f_{\lambda/\mu}g(\lambda).
\end{equation}
In particular, if there exists some positive integer $r$ such that
$D^r g(\lambda)=0$ for every strict partition $\lambda$, then the
left-hand side of \eqref{eq:telescope*} is a polynomial in $n$ with
degree at most $r-1$.
\end{thm}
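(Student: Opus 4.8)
The plan is to deduce Theorem~\ref{th:telescope} directly from Lemma~\ref{th:telescope*} by iterating the finite-difference recursion and recognizing the result as a binomial transform. First I would apply Lemma~\ref{th:telescope*} with the function $g$ to get $A(n)=A(0)+\sum_{k=0}^{n-1}B(k)$, where $A(n)=\sum_{|\lambda/\mu|=n}f'_{\lambda/\mu}g(\lambda)$ and $B(n)=\sum_{|\lambda/\mu|=n}f'_{\lambda/\mu}Dg(\lambda)$. The crucial observation is that $B(n)$ has exactly the same form as $A(n)$, but with $g$ replaced by $Dg$; hence the same lemma applies to $Dg$, $D^2g$, and so on. Writing $A_j(n):=\sum_{|\lambda/\mu|=n}f'_{\lambda/\mu}D^jg(\lambda)$, Lemma~\ref{th:telescope*} gives $A_j(n)=A_j(0)+\sum_{k=0}^{n-1}A_{j+1}(k)$ for every $j\ge 0$. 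Note also that $A_j(0)=f'_{\mu/\mu}D^jg(\mu)=D^jg(\mu)$, since the only strict partition $\lambda\supseteq\mu$ with $|\lambda/\mu|=0$ is $\mu$ itself and $f_{\mu/\mu}=1$.

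Next I would prove \eqref{eq:telescope*}, namely $A_0(n)=\sum_{k=0}^n\binom{n}{k}D^kg(\mu)$, by induction on $n$. The base case $n=0$ is $A_0(0)=g(\mu)$, which holds. For the inductive step, one iterates the recursion $A_0(n)=A_0(0)+\sum_{k_1=0}^{n-1}A_1(k_1)$, then expands each $A_1(k_1)=A_1(0)+\sum_{k_2=0}^{k_1-1}A_2(k_2)$, and so forth; after $r$ steps the term involving $D^rg$ appears with coefficient equal to the number of chains $n>k_1>k_2>\cdots$, i.e. a nested-sum-of-ones count which evaluates to $\binom{n}{r}$ by the hockey-stick identity $\sum_{k=0}^{n-1}\binom{k}{r-1}=\binom{n}{r}$. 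Alternatively, and more cleanly, I would simply verify that the claimed formula satisfies the recursion: if $A_0(n)=\sum_k\binom{n}{k}D^kg(\mu)$ and $A_1(n)=\sum_k\binom{n}{k}D^{k+1}g(\mu)$ (the latter being the same statement applied to $Dg$ in place of $g$, which is legitimate because the whole identity is being proved for an arbitrary function), then $A_0(0)+\sum_{m=0}^{n-1}A_1(m)=g(\mu)+\sum_{k\ge 0}D^{k+1}g(\mu)\sum_{m=0}^{n-1}\binom{m}{k}=g(\mu)+\sum_{k\ge 0}\binom{n}{k+1}D^{k+1}g(\mu)=\sum_{j\ge 0}\binom{n}{j}D^jg(\mu)$, matching $A_0(n)$. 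Since both sides satisfy the same first-order recursion with the same initial value, they agree for all $n$.

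For \eqref{eq:telescope**} I would invert \eqref{eq:telescope*} using the standard binomial inversion formula: the relations $a_n=\sum_{k=0}^n\binom{n}{k}b_k$ and $b_n=\sum_{k=0}^n(-1)^{n+k}\binom{n}{k}a_k$ are equivalent. Taking $a_n=A_0(n)=\sum_{|\lambda/\mu|=n}f'_{\lambda/\mu}g(\lambda)$ and $b_k=D^kg(\mu)$ in \eqref{eq:telescope*} immediately yields \eqref{eq:telescope**}. Finally, for the polynomiality statement: if $D^rg(\lambda)=0$ for all strict partitions $\lambda$, then in particular $D^kg(\mu)=0$ for all $k\ge r$, so the sum in \eqref{eq:telescope*} truncates to $\sum_{k=0}^{r-1}\binom{n}{k}D^kg(\mu)$, which is a polynomial in $n$ of degree at most $r-1$ because each $\binom{n}{k}$ is a degree-$k$ polynomial in $n$.

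I expect the only mild subtlety to be making the iteration/inversion bookkeeping precise — in particular justifying that Lemma~\ref{th:telescope*} may be reapplied to $D^jg$ (immediate, since the lemma holds for every function of strict partitions) and ensuring the hockey-stick summations are assembled correctly. There is no real analytic or combinatorial obstacle here; the content has already been established in Lemmas~\ref{th:f'} and~\ref{th:telescope*}, and Theorem~\ref{th:telescope} is essentially their formal consequence via binomial transform identities.
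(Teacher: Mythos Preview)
Your proposal is correct and follows essentially the same approach as the paper: induction on $n$ using Lemma~\ref{th:telescope*}, applying the induction hypothesis simultaneously to $g$ and to $Dg$, and then inverting via the binomial (M\"obius) transform to obtain \eqref{eq:telescope**}. The only cosmetic difference is that the paper uses the single-step recursion $A(n+1)=A(n)+B(n)$ together with Pascal's identity $\binom{n}{k}+\binom{n}{k-1}=\binom{n+1}{k}$, whereas you work with the telescoped form $A(n)=A(0)+\sum_{m<n}B(m)$ and the hockey-stick identity; these are equivalent packagings of the same computation.
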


\begin{proof}
We will prove \eqref{eq:telescope*} by induction. The case $n=0$ is
trivial. Assume that \eqref{eq:telescope*} is true for some
nonnegative integer $n$. Then by Lemma \ref{th:telescope*} we have
\begin{align*}
\sum_{ |\lambda/\mu|=n+1}{f'}_{\lambda/\mu}g(\lambda)&=
\sum_{ |\lambda/\mu|=n}{f'}_{\lambda/\mu}g(\lambda)+
\sum_{ |\lambda/\mu|=n}{f'}_{\lambda/\mu}Dg(\lambda)
\\&=\sum_{k=0}^n\binom{n}{k}D^kg(\mu)+\sum_{k=0}^n\binom{n}{k}D^{k+1}g(\mu)
\\&=\sum_{k=0}^{n+1}\binom{n+1}{k}D^kg(\mu).
\end{align*}

Identity \eqref{eq:telescope**} follows from the M\"obius inversion
formula \cite{Rota1964}.
\end{proof}

{\it Example}. Let $g(\lambda)=1/H_\lambda$. Then $Dg(\lambda)=0$ by
Theorem \ref{th:sumhlambda}. The two quantities defined in Lemma
\ref{th:telescope*} are:
$$A(n)=\sum_{|\lambda/\mu|=n}  \frac{{{f'}_{\lambda/ \mu}}}{H_{\lambda}}
\text{\qquad and\qquad} B(n)=0.$$ Consequently,
\begin{equation}\label{eq:skewhook}
\sum_{|\lambda/\mu|=n}
\frac{{2^{|\lambda|-|\mu|-\ell(\lambda)+\ell(\mu)}f_{\lambda/\mu}}}{H_{\lambda}}
= \frac 1{H_\mu}.
\end{equation}
In particular,   $\mu=\emptyset$ implies
\begin{equation}\label{eq:skewhook**}
\sum_{|\lambda|=n}
\frac{{2^{|\lambda|-\ell(\lambda)}f_{\lambda}}}{H_{\lambda}} = 1,
\end{equation}
or equivalently,
\begin{equation}\label{eq:skewhook***}
\sum_{|\lambda|=n} {2^{|\lambda|-\ell(\lambda)}f_{\lambda}^2} = n!.
\end{equation}


\section{Corners of strict partitions} 

For a strict partition $\lambda$, the \emph{outer corners} are the
boxes which can be removed to get a new strict partition
$\lambda^-$. Let $(\alpha_1,\beta_1),\ldots,(\alpha_{m},\beta_{m})$
be the coordinates of outer corners such that
$\alpha_1>\alpha_2>\cdots
>\alpha_m$. Let $y_j=\beta_j-\alpha_j$ be the contents of outer
corners for $1\leq j \leq m.$ We set $\alpha_{m+1}=0,\
\beta_0=\ell(\lambda)+1$ and call
$(\alpha_1,\beta_0),(\alpha_2,\beta_1),\ldots,(\alpha_{m+1},\beta_{m})$
 the  \emph{inner corners} of $\lambda$. Let $x_i=\beta_i-\alpha_{i+1}$
 be the contents of inner corners for $0\leq i \leq m$ (see Figure \ref{fig:3}).
The following relations of $x_i$ and $y_j$ are obvious.
\begin{equation}
x_0=1\leq y_1<x_1<y_2<x_2<\cdots <y_m <x_m.
\end{equation}
Notice that $y_1=1$ iff $\lambda_{\ell(\lambda)}=1$.

We define
\begin{equation}\label{def:qk}
q_k(\lambda):=\sum_{i=0}^m \binom{x_i}{2}^{k}-\sum_{i=1}^m
\binom{y_i}{2}^{k}
\end{equation}
for each $k\geq 0$.
For each partition $\nu=(\nu_1, \nu_2, \ldots, \nu_\ell)$ we define
the function  $q_\nu(\lambda)$ of strict partitions by
\begin{equation}
q_\nu(\lambda):=q_{\nu_1}(\lambda)q_{\nu_2}(\lambda)\cdots
q_{\nu_\ell}(\lambda).
\end{equation}

First we consider the difference between the hook length sets of $\lambda$ and $\lambda^+=\lambda\cup \{ \Box\}$ for some box $\Box$.

\begin{thm} \label{th:Hlambda}
Suppose that $\lambda^+=\lambda\cup \{ \square\}$ such that
$c_\square=x_i.$ If $i=0,$ then
\begin{equation*}
\frac{H_{\lambda}}{H_{{\lambda^+}}}=\frac{\prod\limits_{\substack{1\leq
j\leq
m}}\left(\binom{x_0}{2}-\binom{y_j}{2}\right)}{\prod\limits_{\substack{1\leq
j\leq m}}\left(\binom{x_0}{2}-\binom{x_j}{2}\right)}.
\end{equation*} If $1\leq
i\leq m,$ then
\begin{equation*}
\frac{H_{\lambda}}{H_{{\lambda^+}}}=\frac12\cdot\frac{\prod\limits_{\substack{1\leq
j\leq m}}\left(\binom{x_i}{2}-\binom{y_j}{2}\right)}{\prod\limits_{\substack{0\leq j\leq m\\
j\neq i}}\left(\binom{x_i}{2}-\binom{x_j}{2}\right)}.
\end{equation*}
\end{thm}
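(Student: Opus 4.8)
\medskip
\noindent\emph{Proof idea.}\quad The plan is to reduce the statement to the product formula for $H_\lambda$ in terms of the parts of $\lambda$ and then to run an elementary telescoping. Identify $\lambda=(\lambda_1>\dots>\lambda_\ell)$ with its set of parts $S=\{\lambda_1,\dots,\lambda_\ell\}\subseteq\mathbb Z_{>0}$. Combining $f_\lambda=|\lambda|!/H_\lambda$ from \eqref{eq:hookformula*} with Schur's product formula for the number of standard shifted Young tableaux (see \cite{schur,thr,bandlow}) yields
\[
H_\lambda=\prod_{i=1}^{\ell}\lambda_i!\cdot\prod_{1\le i<j\le\ell}\frac{\lambda_i+\lambda_j}{\lambda_i-\lambda_j}.
\]
The first step is to express the corner data through $S$. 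Write $S$ as a disjoint union of maximal runs of consecutive integers $[p_1,q_1],\dots,[p_m,q_m]$ with $q_1<\dots<q_m$. A direct check against the definitions shows that the outer corner contents are exactly $y_k=p_k$ $(1\le k\le m)$ and the inner corner contents are $x_0=1$ together with $x_k=q_k+1$ $(1\le k\le m)$; in particular the interlacing $x_0\le y_1<x_1<\dots<y_m<x_m$ and the equivalence $x_0=y_1\Leftrightarrow\lambda_\ell=1$ are immediate. For $1\le i\le m$, adding the box of content $x_i$ amounts to replacing the part $\lambda_r:=q_i$ by $q_i+1$; for $i=0$ (which needs $1\notin S$, i.e.\ $x_0<y_1$, so that such a box exists at all) it amounts to appending a new part $1$.

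Next I would compute the ratio. Put $a:=c_\square-1$, so the new box has content $a+1=x_i$. For $1\le i\le m$ we have $a=q_i\in S$ and $a+1\notin S$, hence replacing the part $a$ by $a+1$ does not disturb the relative order of the parts, and the product formula gives at once
\[
\frac{H_\lambda}{H_{\lambda^+}}=\frac{1}{a+1}\prod_{s\in S\setminus\{a\}}\frac{(a+s)(a+1-s)}{(a+1+s)(a-s)},
\]
where $1/(a+1)$ comes from $\lambda_r!\mapsto(a+1)!$ and the product collects the pair-factors involving the $r$-th part (the ones with index smaller than $r$ being brought into this shape by flipping the sign in numerator and denominator). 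I would then evaluate this run by run, using the two telescopings
\[
\prod_{s=p}^{q}\frac{a+s}{a+1+s}=\frac{a+p}{a+1+q},
\qquad
\prod_{s=p}^{q}\frac{a+1-s}{a-s}=\frac{a+1-p}{a-q},
\]
with the upper limit of the $i$-th run taken to be $q_i-1$ and $q_i=a$ substituted there. The $k$-th run then contributes the factor $(a+y_k)(a+1-y_k)/[(a+x_k)(a+1-x_k)]$ for $k\ne i$, while the $i$-th run contributes $(a+y_i)(a+1-y_i)/(2a)$, so the whole product collapses to
\[
\frac{H_\lambda}{H_{\lambda^+}}=\frac{1}{2a(a+1)}\cdot\frac{\prod_{k=1}^{m}(a+y_k)(a+1-y_k)}{\prod_{1\le k\le m,\ k\ne i}(a+x_k)(a+1-x_k)}.
\]

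It then remains to match this with the statement. Using the factorization $\binom{a+1}{2}-\binom{t}{2}=\tfrac12(a+1-t)(a+t)$ and, because $x_0=1$, the special value $\binom{a+1}{2}-\binom{x_0}{2}=\tfrac12 a(a+1)$, the right-hand side of Theorem~\ref{th:Hlambda} for $1\le i\le m$ turns into exactly the last display, the $m$ factors $\tfrac12$ in the numerator cancelling the $m$ in the denominator. For $i=0$ the same argument applied to $\lambda^+=\lambda\cup\{1\}$ gives $H_\lambda/H_{\lambda^+}=\prod_{s\in S}\frac{s-1}{s+1}$, which telescopes run by run to $\prod_{k=1}^{m}\frac{y_k(y_k-1)}{x_k(x_k-1)}$; since $\binom{x_0}{2}=\binom12=0$ turns every $\binom{x_0}{2}-\binom{y_j}{2}$ into $-\binom{y_j}{2}$ and every $\binom{x_0}{2}-\binom{x_j}{2}$ into $-\binom{x_j}{2}$, this is precisely the $i=0$ right-hand side.

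The only real difficulty is bookkeeping rather than any genuine obstacle: one must fix the product formula and the dictionary between the runs of $S$ and the corners exactly, and then keep honest track of the degenerate contributions --- the vanishing factor $a-s$ at $s=a$ inside the $i$-th run, the stray factor $1/(a+1)$ from $\lambda_r!$, and the factor attached to the corner $x_0=1$ --- which is where off-by-one and sign slips are easiest to commit. Everything past that reduces to the two telescopings displayed above.
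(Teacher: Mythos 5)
Your argument is correct, and it takes a genuinely different route from the paper. The paper proves Theorem~\ref{th:Hlambda} by comparing the multisets $\mathcal{H}(\lambda)$ and $\mathcal{H}(\lambda^+)$ directly on the shifted diagram: only the boxes in the column of the new box and in the column immediately to its left change their hook lengths, and the resulting set difference (read off from Figure~3) is then repackaged via $\binom{x}{2}-\binom{y}{2}=\tfrac12(x-y)(x+y-1)$. You instead import the closed product form $H_\lambda=\prod_i\lambda_i!\,\prod_{i<j}\frac{\lambda_i+\lambda_j}{\lambda_i-\lambda_j}$ (equivalent to Schur's formula together with \eqref{eq:hookformula*}), translate the corner contents into the maximal runs of consecutive parts ($y_k=p_k$, $x_k=q_k+1$, $x_0=1$ --- this dictionary is right, as is the degenerate case where the $i$-th run is a singleton, so that its contribution $(a+y_i)(a+1-y_i)/(2a)$ collapses to $1$), and telescope run by run; I checked the two telescopings, the sign flip for parts larger than $a$, and the final bookkeeping of the factors $\tfrac12$ against $\binom{a+1}{2}-\binom{x_0}{2}=\tfrac12 a(a+1)$, and they all come out as you claim. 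What the paper's approach buys is self-containedness and the explicit multiset identity for $\mathcal{H}(\lambda)\setminus\mathcal{H}(\lambda^+)$, which is of independent interest; what yours buys is the replacement of delicate diagram bookkeeping by a purely algebraic telescoping, at the cost of taking the Schur product formula for $H_\lambda$ as a known input (a standard fact covered by the references \cite{schur,thr,bandlow} the paper already cites).
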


\begin{figure}[!b] 
\centering
\begin{center}
\includegraphics[]{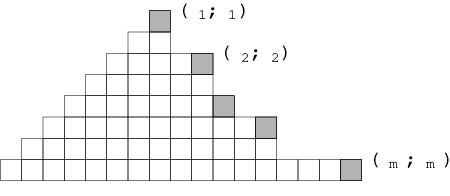}
\end{center}
\caption{A strict partition and its corners. The outer corners are labelled with
$(\alpha_i, \beta_i)$ ($i=1,2,\ldots, m$). The inner corners are indicated by
the dot symbol ``$\cdot$''.}\label{fig:3}
\end{figure}

\begin{proof}
First we consider the case $i=0$, which means that $y_1\geq 2$.  In this case we add the box $\square=(\ell+1,\ell+2)$ to $\lambda$ with $c_\square=x_0=1$ where $\ell$ is the length of $\lambda$. By the definition, it is easy to see that the hook lengths of boxes
which are in the $(\ell+1)$-th column and $(\ell+2)$-th column of $\lambda$
 increase by $1$, and the hook lengths of boxes in the other columns don't change. Since the boxes
which are in the $\ell+1$-column and $\ell+2$-column of $\lambda$ have hook lengths 
\begin{align*}
    \bigcup_{j=1}^{m}\left(\{h:y_j-1\leq h\leq x_{j}-2\}\cup \{h:y_j\leq h\leq x_{j}-1\} \right),
\end{align*}
then the boxes
which are in the $\ell+1$-column and $\ell+2$-column of $\lambda^+$ have hook lengths 
\begin{align*}
   \{1\} \cup \left(\bigcup_{j=1}^{m}\left(\{h:y_j\leq h\leq x_{j}-1\}\cup \{h:y_j+1\leq h\leq x_{j}\} \right)\right).
\end{align*}
 Therefore
\begin{align*}\mathcal{H}(\lambda)\setminus \mathcal{H}(\lambda^+)\ &=
\{ y_1,y_1-1,y_2,y_2-1,\cdots, y_m,y_m-1 \}
\\ &\qquad \setminus 
\{ 1,x_1,x_1-1,x_2,x_2-1,\cdots, x_m,x_m-1 \}
,
\end{align*}
which means that
\begin{equation*}
\frac{H_{\lambda}}{H_{{\lambda^+}}}=\frac{\prod\limits_{\substack{1\leq
j\leq m}}y_j(y_j-1)}{\prod\limits_{\substack{1\leq j\leq
m}}x_j(x_j-1)}=\frac{\prod\limits_{\substack{1\leq j\leq
m}}\left(\binom{x_0}{2}-\binom{y_j}{2}\right)}{\prod\limits_{\substack{1\leq
j\leq m}}\left(\binom{x_0}{2}-\binom{x_j}{2}\right)}
\end{equation*}
since $x_0=1$.

Similarly, for the case $1\leq i\leq m$, we add
the box $\square=(\alpha_{i+1}+1,\beta_i+1)$ to $\lambda$ with $c_\square=x_i$.  By the definition, it is easy to see that the hook lengths of boxes
which are in the $(\alpha_{i+1}+1)$-th column, $(\beta_i+1)$-th column and $(\alpha_{i+1}+1)$-th row of $\lambda$
 increase by $1$, and the hook lengths of other boxes don't change. Since the boxes
which are in the $(\alpha_{i+1}+1)$-th column, $(\beta_i+1)$-th column and $(\alpha_{i+1}+1)$-th row of $\lambda$ have hook lengths 
\begin{align*}
  \bigcup_{j=i+1}^{m}\{h:x_i+y_j-1\leq h\leq x_{i}+x_j-2\},
\end{align*}
\begin{align*}
  \bigcup_{j=i+1}^{m}\{h:y_j-x_i\leq h\leq x_{j}-x_i-1\},
\end{align*}
and
\begin{align*}
  & \{h:x_i-y_1\leq h\leq x_{i}-1\} \cup \{h:x_i+y_i-1\leq h\leq 2x_{i}-3\}  \\&  \cup \bigcup_{j=1}^{i-1}\left(\{h:x_i-y_{j+1}\leq h\leq x_{i}-x_j-1\}
   \cup \{h:x_i+y_{j}-1\leq h\leq x_{i}+x_j-2\}\right)
\end{align*}
respectively,
then the same boxes
which are in the $(\alpha_{i+1}+1)$-th column, $(\beta_i+1)$-th column and $(\alpha_{i+1}+1)$-th row of $\lambda^+$ have hook lengths 
\begin{align*}
  \bigcup_{j=i+1}^{m}\{h:x_i+y_j\leq h\leq x_{i}+x_j-1\},
\end{align*}
\begin{align*}
  \bigcup_{j=i+1}^{m}\{h:y_j-x_i+1\leq h\leq x_{j}-x_i\},
\end{align*}
and
\begin{align*}
  & \{h:x_i-y_1+1\leq h\leq x_{i}\} \cup \{h:x_i+y_i\leq h\leq 2x_{i}-2\}  \\&  \cup \bigcup_{j=1}^{i-1}\left(\{h:x_i-y_{j+1}+1\leq h\leq x_{i}-x_j\}
   \cup \{h:x_i+y_{j}\leq h\leq x_{i}+x_j-1\}\right)
\end{align*}
respectively.
Notice that the box $\square=(\alpha_{i+1}+1,\beta_i+1)$ in $\lambda^+$ has hook length $1$,
then we have
\begin{align*}
\mathcal{H}(\lambda)\setminus \mathcal{H}(\lambda^+)
&=
\left( \{ |x_i-y_j|:1\leq j \leq m \} \cup \{ x_i+y_j-1:1\leq j \leq m \} \right)\\
&\qquad \setminus 
\bigl( \{ 1,x_i, 2x_i-2 \} \cup  
\{ |x_i-x_j|:1\leq j \leq m, j\neq i \} \\ 
& \qquad\qquad \cup \{x_i+x_j-1:1\leq j \leq m, j\neq i \} \bigr)
,
\end{align*}
which means that  
\begin{equation*}
\frac{H_{\lambda}}{H_{{\lambda^+}}}=\frac{1}{x_i(2x_i-2)}\cdot\frac{\prod\limits_{\substack{1\leq
j\leq m}}(x_i-y_j)(x_i+y_j-1)}{\prod\limits_{\substack{1\leq j\leq m\\
j\neq
i}}(x_i-x_j)(x_i+x_j-1)}=\frac12\cdot\frac{\prod\limits_{\substack{1\leq
j\leq m}}\left(\binom{x_i}{2}-\binom{y_j}{2}\right)}{\prod\limits_{\substack{0\leq j\leq m\\
j\neq i}}\left(\binom{x_i}{2}-\binom{x_j}{2}\right)}.\qedhere
\end{equation*}
\end{proof} 

\medskip
Suppose that $a_0<a_1<\cdots<a_m$ and ${b}_1<\cdots<{b}_m$
are real numbers. Let
\begin{equation}\label{def:qk*} q_k(\{a_i\},\{{b}_i\}):=\sum_{i=0}^m
{a_i}^{k}-\sum_{i=1}^m {{b}_i}^{k}
\end{equation}
for each $k\geq 0$ and
\begin{equation}
q_\nu(\{a_i\},\{{b}_i\}):=\prod_{j=1}^{\ell}q_{\nu_j}(\{a_i\},\{{b}_i\})
\end{equation}
for the usual partition $\nu=(\nu_1, \nu_2, \ldots, \nu_\ell)$.
Notice
that $$q_k(\lambda)=q_k\left(\{\binom{x_i}{2}\}_{0\leq i\leq
m},\{\binom{y_i}{2}\}_{1\leq i\leq m}\right).$$
\begin{thm} \label{th:aibi}
Let $k$ be a nonnegative  integer. Then there exist some
$\xi_{\nu}\in \mathbb{Q}$ such that
\begin{equation*} \sum_{0\leq i\leq m}
\frac{\prod\limits_{\substack{1\leq
j\leq m}}({a_i}-{{b}_j})}{\prod\limits_{\substack{0\leq j\leq m\\
j\neq i}}({a_i}-{a_j})}a_i^k
=\sum_{|\nu|\leq k}\xi_{\nu}q_\nu(\{a_i\},\{{b}_i\})
\end{equation*}
for all  real numbers $a_0<a_1<\cdots<a_m$ and
${b}_1<{b}_2<\cdots<{b}_m$. 
\end{thm}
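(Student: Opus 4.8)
The plan is to recognize the left-hand side as a single coefficient in the Laurent expansion at infinity of an explicit rational function, and then to expand that function using the power sums that are hidden inside it. Write $F(x)=\prod_{j=1}^m(x-b_j)$ and $G(x)=\prod_{i=0}^m(x-a_i)$, so that $\prod_{1\le j\le m}(a_i-b_j)=F(a_i)$ and $\prod_{0\le j\le m,\,j\ne i}(a_i-a_j)=G'(a_i)$. Since $a_0<a_1<\cdots<a_m$, the polynomial $G$ has simple roots, so $R(x):=x^kF(x)/G(x)$ admits the partial fraction expansion $R(x)=P(x)+\sum_{i=0}^m \frac{c_i}{x-a_i}$ with $c_i=a_i^kF(a_i)/G'(a_i)$ and $P$ a polynomial (identically $0$ when $k=0$). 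The left-hand side of the asserted identity is precisely $\sum_{i=0}^m c_i$. Expanding $\frac{c_i}{x-a_i}=\sum_{r\ge1}c_ia_i^{r-1}x^{-r}$ for large $|x|$ and noting that $P$ contributes no negative powers, we get $\sum_{i=0}^m c_i=[x^{-1}]R(x)$, the coefficient of $x^{-1}$ in the Laurent expansion of $R$ at infinity.

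Next I would compute that expansion explicitly. Since $\deg F=m$ and $\deg G=m+1$,
\[
R(x)=x^{k}\cdot\frac{\prod_{j=1}^m(x-b_j)}{\prod_{i=0}^m(x-a_i)}
=x^{k-1}\cdot\frac{\prod_{j=1}^m(1-b_j/x)}{\prod_{i=0}^m(1-a_i/x)}
=x^{k-1}\exp\Bigl(\sum_{l\ge1}\frac{q_l}{l}\,x^{-l}\Bigr),
\]
where the last equality uses $\log(1-u)=-\sum_{l\ge1}u^l/l$ together with the definition \eqref{def:qk*}, $q_l:=q_l(\{a_i\},\{b_i\})=\sum_{i=0}^m a_i^{\,l}-\sum_{i=1}^m b_i^{\,l}$. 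Hence $[x^{-1}]R(x)=[x^{-k}]\exp\bigl(\sum_{l\ge1}\frac{q_l}{l}x^{-l}\bigr)$, which after the substitution $t=1/x$ equals $[t^k]\exp\bigl(\sum_{l\ge1}\frac{q_l}{l}t^l\bigr)$. This step — spotting that $\log(F/G)$ is a generating function for the $q_l$ — is the real point of the argument.

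Finally I would invoke the classical expansion of the complete homogeneous symmetric functions in terms of power sums (see \cite{ec2}): formally $\exp\bigl(\sum_{l\ge1}\frac{p_l}{l}t^l\bigr)=\sum_{n\ge0}\bigl(\sum_{|\nu|=n}z_\nu^{-1}p_\nu\bigr)t^n$, where $z_\nu=\prod_{i\ge1}i^{m_i}m_i!$ and $m_i$ is the number of parts of $\nu$ equal to $i$. Specializing $p_l\mapsto q_l$ and extracting the coefficient of $t^k$ yields
\[
\sum_{0\le i\le m}\frac{\prod_{1\le j\le m}(a_i-b_j)}{\prod_{0\le j\le m,\,j\ne i}(a_i-a_j)}\,a_i^k
=\sum_{|\nu|=k}\frac{1}{z_\nu}\,q_\nu(\{a_i\},\{b_i\}),
\]
so the theorem holds with $\xi_\nu=1/z_\nu\in\mathbb{Q}$ when $|\nu|=k$ and $\xi_\nu=0$ when $|\nu|<k$. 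The only matters requiring care are the bookkeeping with the series at infinity (legitimate because $R$ is rational with $G$ having simple roots, or one may work purely formally in the variable $t=1/x$) and quoting the correct symmetric-function identity; there is no substantial obstacle beyond these. As a sanity check, $k=0$ collapses to $\sum_i F(a_i)/G'(a_i)=1$, i.e.\ the degree-$m$ Lagrange interpolant of the monic polynomial $F$ through $a_0,\dots,a_m$ has leading coefficient $1$, which is consistent with the example following Theorem \ref{th:telescope}.
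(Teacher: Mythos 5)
Your proof is correct and is essentially the paper's own argument in the variable $x=1/z$: the partial-fraction decomposition of $x^kF(x)/G(x)$ is exactly the paper's Lagrange-type identity $\sum_i \frac{F(a_i)}{G'(a_i)}\frac{1}{1-a_iz}=\frac{\prod_j(1-b_jz)}{\prod_j(1-a_jz)}$ (proved there by showing a degree-$\le m$ polynomial with $m+1$ roots vanishes), and both proofs then take the logarithm of that rational function to produce $\exp\bigl(\sum_{l\ge1}\frac{q_l}{l}z^l\bigr)$ and extract the coefficient of $z^k$. The only addition on your side is quoting the power-sum expansion of the complete homogeneous symmetric functions to make $\xi_\nu=1/z_\nu$ explicit, which the paper leaves implicit.
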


\begin{proof}
First notice we just need to prove the case that $a_i\neq 0$ for all $i$. Because if we multiply by $\prod_{0\leq i< j \leq n}({a_i}-{a_j})$ on both sides of the above formula, then both sides become polynomials in $a_0,a_1,\ldots,a_m$ and
${b}_1,{b}_2,\ldots,{b}_m$, which means they are continuous functions on such variables. Therefore if the above formula is true for all nonzero $a_i$, then it is also true for the case $a_i= 0$ for some $i$. 
Let
$$
g(z)=\prod_{1\leq j\leq m}(1-{b}_jz) -
\sum_{0\leq i\leq m}\frac{\prod\limits_{\substack{1\leq
j\leq m}}({a_i}-{{b}_j})}{\prod\limits_{\substack{0\leq j\leq m\\
j\neq i}}({a_i}-{a_j})} \prod_{\substack{0\leq j\leq m\\ j\neq
i}}(1-{a}_j z).
$$
Then for $0\leq i\leq m$  we obtain
\begin{align*}
    g\left(\frac{1}{a_i}\right)&= \prod_{1\leq j\leq m}\left(1-\frac{{b}_j}{a_i}\right)-
\frac{\prod\limits_{\substack{1\leq
j\leq m}}({a_i}-{{b}_j})}{\prod\limits_{\substack{0\leq j\leq m\\
j\neq i}}({a_i}-{a_j})} \prod\limits_{\substack{0\leq j\leq m\\
j\neq i}}\left(1-\frac{a_j}{a_i}\right) = 0.
\end{align*}
This means that $g(z)$ has at least $m+1$ roots, so that $g(z)=0$
since $g(z)$ is a polynomial in $z$ with degree at most $m$.
Therefore
$$
\sum_{0\leq i\leq m}\frac{\prod\limits_{\substack{1\leq
j\leq m}}({a_i}-{{b}_j})}{\prod\limits_{\substack{0\leq j\leq m\\
j\neq i}}({a_i}-{a_j})}\cdot \frac{1}{1-a_iz} =\frac{\prod_{1\leq
j\leq m}(1-{b}_jz)}{\prod_{0\leq j\leq m}(1-a_j z)},
$$
which means that
\begin{align}
&\sum_{0\leq i\leq m}\frac{\prod\limits_{\substack{1\leq
j\leq m}}({a_i}-{{b}_j})}{\prod\limits_{\substack{0\leq j\leq m\\
j\neq i}}({a_i}-{a_j})}\left(\sum_{k\geq0}(a_iz)^k\right) \label{eq:ab}
\\ 
&=
\exp\left(\sum_{1\leq j\leq m}\ln(1-{b}_jz)- \sum_{0\leq i\leq
m}\ln(1-a_iz)\right)\nonumber
\\
&= \exp\left(\sum_{k\geq
1}\frac{q_k(\{a_i\},\{{b}_i\})}{k}z^k\right).\nonumber 
\end{align}
Comparing the coefficients of $z^k$ on both sides, we obtain there
exist some $\xi_{\nu}\in \mathbb{Q}$ such that
\begin{equation*} \sum_{0\leq i\leq m}
\frac{\prod\limits_{\substack{1\leq
j\leq m}}({a_i}-{{b}_j})}{\prod\limits_{\substack{0\leq j\leq m\\
j\neq i}}({a_i}-{a_j})}a_i^k=\sum_{|\nu|\leq
k}\xi_{\nu}q_\nu(\{a_i\},\{{b}_i\})
\end{equation*}  for all real
numbers $a_0<a_1<\cdots<a_m$ and ${b}_1<{b}_2<\cdots<{b}_m$.
\end{proof}

By \eqref{eq:ab}, when $k=0,1,2$, we obtain
\begin{align} 
\sum_{0\leq i\leq m}
\frac{\prod\limits_{\substack{1\leq
j\leq m}}({a_i}-{{b}_j})}{\prod\limits_{\substack{0\leq j\leq m\\
j\neq i}}({a_i}-{a_j})} &=1,\label{eq:ab0} \\
\sum_{0\leq i\leq m}
\frac{\prod\limits_{\substack{1\leq
j\leq m}}({a_i}-{{b}_j})}{\prod\limits_{\substack{0\leq j\leq m\\
j\neq i}}({a_i}-{a_j})} a_i&=q_1(\{a_i\}, \{b_i\}), \label{eq:ab1}\\
\sum_{0\leq i\leq m}
\frac{\prod\limits_{\substack{1\leq
j\leq m}}({a_i}-{{b}_j})}{\prod\limits_{\substack{0\leq j\leq m\\
j\neq i}}({a_i}-{a_j})} a_i^2&=\frac {q_1^2(\{a_i\}, \{b_i\}) + q_2(\{a_i\}, \{b_i\})}{2}. \label{eq:ab2} 
\end{align}

Let $\lambda^{i+}=\lambda\cup \{ \square_i\}$ such that
$c_{\square_i}=x_i$ for $1\leq i\leq m$. If $y_1> 1,$ let
$\lambda^{0+}=\lambda\cup \{ \square_0\}$ such that
$c_{\square_0}=x_0=1.$

\begin{thm} \label{th:sumhlambda}
Suppose that $\lambda$ is a given strict partition.  Then
\begin{equation*} 
	D\left(\frac{1}{H_{{\lambda}}}\right) =0.
\end{equation*}
\end{thm}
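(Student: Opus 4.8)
The plan is to unwind the definition of the difference operator $D$ applied to $g(\lambda)=1/H_\lambda$ and reduce the claim to the algebraic identity \eqref{eq:ab0}. By definition,
\[
D\Bigl(\frac{1}{H_\lambda}\Bigr)=\sum_{\ell(\lambda^+)>\ell(\lambda)}\frac{1}{H_{\lambda^+}}+2\sum_{\ell(\lambda^+)=\ell(\lambda)}\frac{1}{H_{\lambda^+}}-\frac{1}{H_\lambda}.
\]
The boxes $\square$ that can be added to $\lambda$ are exactly the inner corners, with contents $x_0=1,x_1,\dots,x_m$; adding the box of content $x_0$ (when $y_1>1$, i.e. $\lambda_{\ell(\lambda)}>1$) is the unique case that increases the length, while adding the box of content $x_i$ for $1\le i\le m$ leaves the length unchanged. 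First I would rewrite each $1/H_{\lambda^{i+}}$ as $\frac{1}{H_\lambda}\cdot\frac{H_\lambda}{H_{\lambda^{i+}}}$ and substitute the two formulas from Theorem \ref{th:Hlambda}: for $i=0$ the ratio is $\prod_j\bigl(\binom{x_0}{2}-\binom{y_j}{2}\bigr)/\prod_j\bigl(\binom{x_0}{2}-\binom{x_j}{2}\bigr)$, and for $1\le i\le m$ it is $\tfrac12\prod_j\bigl(\binom{x_i}{2}-\binom{y_j}{2}\bigr)/\prod_{j\ne i}\bigl(\binom{x_i}{2}-\binom{x_j}{2}\bigr)$. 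The factor $2$ in front of the sum over length-preserving additions exactly cancels the $\tfrac12$ coming from Theorem \ref{th:Hlambda}.

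After this substitution, $D(1/H_\lambda)=\frac{1}{H_\lambda}\Bigl(S-1\Bigr)$ where
\[
S=\sum_{0\le i\le m}\frac{\prod_{1\le j\le m}\bigl(\binom{x_i}{2}-\binom{y_j}{2}\bigr)}{\prod_{0\le j\le m,\,j\ne i}\bigl(\binom{x_i}{2}-\binom{x_j}{2}\bigr)}.
\]
Setting $a_i=\binom{x_i}{2}$ for $0\le i\le m$ and $b_j=\binom{y_j}{2}$ for $1\le j\le m$, the interlacing inequalities $x_0=1\le y_1<x_1<y_2<\cdots<y_m<x_m$ give $a_0<a_1<\cdots<a_m$ and $b_1<b_2<\cdots<b_m$ (note $a_0=0$, and strictness of the $a_i$ holds since $\binom{\cdot}{2}$ is strictly increasing on integers $\ge 1$), so $S$ is precisely the left-hand side of \eqref{eq:ab0} with $k=0$, hence $S=1$. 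Therefore $D(1/H_\lambda)=\frac{1}{H_\lambda}(1-1)=0$.

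The only subtlety — which I would handle carefully — is the boundary case when $y_1=1$, i.e. $\lambda_{\ell(\lambda)}=1$: then $x_0=y_1=1$, there is no inner corner of content $1$ available to add a box increasing the length, and correspondingly the term $i=0$ should be absent from $S$. But in that case $a_0=\binom{1}{2}=0=b_1$, so the numerator of the $i=0$ term in \eqref{eq:ab0} vanishes, and simultaneously the factor $(a_0-b_1)$ cancels a genuine singularity; more cleanly, one observes that \eqref{eq:ab0} was proved for arbitrary complex $a_i,b_j$ with the stated strict inequalities, and the degenerate strict partition simply corresponds to dropping index $0$ and relabelling, i.e. applying \eqref{eq:ab0} with $m$ replaced by $m-1$ and $\{a_1,\dots,a_m\}$, $\{b_2,\dots,b_m\}$ — which again yields $S=1$. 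So in both cases the identity $S=1$ holds and the theorem follows. I do not expect any real obstacle beyond this bookkeeping; the heart of the argument is the clean match between the combinatorial hook-ratio formula of Theorem \ref{th:Hlambda} and the partial-fraction identity \eqref{eq:ab0}.
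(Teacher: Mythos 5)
Your proposal is correct and follows essentially the same route as the paper: unwind the definition of $D$, convert each $1/H_{\lambda^{+}}$ into $\frac{1}{H_\lambda}\cdot\frac{H_\lambda}{H_{\lambda^{+}}}$ via Theorem \ref{th:Hlambda} (with the factor $2$ absorbing the $\tfrac12$), and apply \eqref{eq:ab0} with $a_i=\binom{x_i}{2}$, $b_j=\binom{y_j}{2}$, treating the case $y_1=1$ exactly as the paper does, by noting that the $i=0$ term then vanishes because $\binom{x_0}{2}-\binom{y_1}{2}=0$. The only nitpick is the phrase about cancelling ``a genuine singularity''---there is none, since the denominators involve only the pairwise distinct $a_i$---but this does not affect the argument.
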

\begin{proof}
 Notice that when $y_1=1,$
we have $\{\lambda^+:\ell(\lambda^+)>\ell(\lambda) \}=\emptyset,$
therefore
$$
\sum\limits_{{\ell(\lambda^+)>\ell(\lambda)}}\frac{H_{\lambda}}{H_{{\lambda^{+}}}}=0=\frac{\prod\limits_{\substack{1\leq
j\leq
m}}\left(\binom{x_0}{2}-\binom{y_j}{2}\right)}{\prod\limits_{\substack{1\leq
j\leq m}}\left(\binom{x_0}{2}-\binom{x_j}{2}\right)}.
$$
When $y_1>1,$ we have
$\{\lambda^+:\ell(\lambda^+)>\ell(\lambda)\}=\{\lambda^{0+}\},$
therefore by Theorem \ref{th:Hlambda} we also obtain
$$
\sum\limits_{{\ell(\lambda^+)>\ell(\lambda)}}\frac{H_{\lambda}}{H_{{\lambda^{+}}}}=\frac{\prod\limits_{\substack{1\leq
j\leq
m}}\left(\binom{x_0}{2}-\binom{y_j}{2}\right)}{\prod\limits_{\substack{1\leq
j\leq m}}\left(\binom{x_0}{2}-\binom{x_j}{2}\right)}
$$
and
\begin{align*}
\sum\limits_{{\ell(\lambda^+)>\ell(\lambda)}}\frac{H_{\lambda}}{H_{{\lambda^+}}}
+2\sum\limits_{{\ell(\lambda^+)=\ell(\lambda)}}\frac{H_{\lambda}}{H_{{\lambda^+}}}
=\sum_{0\leq i\leq m} \frac{\prod\limits_{\substack{1\leq
j\leq m}}\left({\binom{x_i}{2}}-{\binom{y_j}{2}}\right)}{\prod\limits_{\substack{0\leq j\leq m\\
j\neq i}}\left({\binom{x_i}{2}}-{\binom{x_j}{2}}\right)}.
\end{align*}
Let $a_i=\binom{x_i}{2}$ and $b_i=\binom{y_i}{2}$ in \eqref{eq:ab0},
we obtain 
\begin{equation*}
D\left(\frac{1}{H_{{\lambda}}}\right)=\frac{1}{H_{{\lambda}}}\left(
\sum\limits_{{\ell(\lambda^+)>\ell(\lambda)}}\frac{H_{\lambda}}{H_{{\lambda^{+}}}}
+2\sum\limits_{{\ell(\lambda^+)=\ell(\lambda)}}\frac{H_{\lambda}}{H_{{\lambda^+}}}-1\right)=0.\qedhere
\end{equation*}
\end{proof}

\begin{cor} \label{th:sumhlambda*}
Suppose that $g$ is a function on strict partitions.  Then
\begin{align*}
D\left(\frac{g(\lambda)}{H_{{\lambda}}}\right)=
\sum\limits_{{\ell(\lambda^+)>\ell(\lambda)}}\frac{g(\lambda^+)-g(\lambda)}{H_{{\lambda^{+}}}}
+2\sum\limits_{{\ell(\lambda^+)=\ell(\lambda)}}\frac{g(\lambda^+)-g(\lambda)}{H_{{\lambda^+}}}
\end{align*}
for every strict partition $\lambda.$
\end{cor}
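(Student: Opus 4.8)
The plan is to unfold the definition of the difference operator $D$ applied to the function $\lambda\mapsto g(\lambda)/H_\lambda$, separate the resulting sum into a part carrying the numerators $g(\lambda^+)$ and a part carrying the constant numerator $g(\lambda)$, and then recognize the latter as a multiple of $D(1/H_\lambda)$, which vanishes by Theorem \ref{th:sumhlambda}.

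Concretely, first I would write, directly from the definition of $D$,
\begin{align*}
D\Bigl(\frac{g(\lambda)}{H_{\lambda}}\Bigr)
=\sum\limits_{\ell(\lambda^+)>\ell(\lambda)}\frac{g(\lambda^+)}{H_{\lambda^+}}
+2\sum\limits_{\ell(\lambda^+)=\ell(\lambda)}\frac{g(\lambda^+)}{H_{\lambda^+}}
-\frac{g(\lambda)}{H_{\lambda}}.
\end{align*}
Next I would add and subtract the quantity $g(\lambda)\bigl(\sum_{\ell(\lambda^+)>\ell(\lambda)}\tfrac{1}{H_{\lambda^+}}+2\sum_{\ell(\lambda^+)=\ell(\lambda)}\tfrac{1}{H_{\lambda^+}}\bigr)$, which groups the first two sums of the display with appropriate $-g(\lambda)/H_{\lambda^+}$ terms to produce the differences $g(\lambda^+)-g(\lambda)$, leaving behind exactly
\begin{align*}
g(\lambda)\Bigl(\sum\limits_{\ell(\lambda^+)>\ell(\lambda)}\frac{1}{H_{\lambda^+}}
+2\sum\limits_{\ell(\lambda^+)=\ell(\lambda)}\frac{1}{H_{\lambda^+}}
-\frac{1}{H_{\lambda}}\Bigr)=g(\lambda)\cdot H_\lambda\cdot D\Bigl(\frac{1}{H_\lambda}\Bigr).
\end{align*}
By Theorem \ref{th:sumhlambda} this last expression is $0$, and the claimed identity follows.

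There is essentially no obstacle here: the only nontrivial input is Theorem \ref{th:sumhlambda}, and the rest is a one-line algebraic rearrangement of the definition of $D$. The one point to state carefully is that the index set $\{\lambda^+\}$ over which all three sums run is the same (partitioned into the two families according to whether $\ell(\lambda^+)>\ell(\lambda)$ or $\ell(\lambda^+)=\ell(\lambda)$), so that the terms with numerator $g(\lambda)$ match up term-by-term with the corresponding terms of $H_\lambda\,D(1/H_\lambda)$; this is immediate from the definition of $D$ but worth making explicit.

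\begin{proof}
By the definition of the difference operator,
\begin{align*}
D\Bigl(\frac{g(\lambda)}{H_{\lambda}}\Bigr)
&=\sum\limits_{\ell(\lambda^+)>\ell(\lambda)}\frac{g(\lambda^+)}{H_{\lambda^+}}
+2\sum\limits_{\ell(\lambda^+)=\ell(\lambda)}\frac{g(\lambda^+)}{H_{\lambda^+}}
-\frac{g(\lambda)}{H_{\lambda}}\\
&=\sum\limits_{\ell(\lambda^+)>\ell(\lambda)}\frac{g(\lambda^+)-g(\lambda)}{H_{\lambda^+}}
+2\sum\limits_{\ell(\lambda^+)=\ell(\lambda)}\frac{g(\lambda^+)-g(\lambda)}{H_{\lambda^+}}\\
&\qquad +g(\lambda)\Bigl(\sum\limits_{\ell(\lambda^+)>\ell(\lambda)}\frac{1}{H_{\lambda^+}}
+2\sum\limits_{\ell(\lambda^+)=\ell(\lambda)}\frac{1}{H_{\lambda^+}}
-\frac{1}{H_{\lambda}}\Bigr)\\
&=\sum\limits_{\ell(\lambda^+)>\ell(\lambda)}\frac{g(\lambda^+)-g(\lambda)}{H_{\lambda^+}}
+2\sum\limits_{\ell(\lambda^+)=\ell(\lambda)}\frac{g(\lambda^+)-g(\lambda)}{H_{\lambda^+}}
+g(\lambda)\,D\Bigl(\frac{1}{H_{\lambda}}\Bigr).
\end{align*}
Since $D\bigl(1/H_{\lambda}\bigr)=0$ by Theorem \ref{th:sumhlambda}, the last term vanishes and the identity follows.
\end{proof}
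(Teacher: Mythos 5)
Your proof is correct and is exactly the argument the paper intends: the corollary is stated without proof immediately after Theorem \ref{th:sumhlambda} precisely because it follows by the one-line rearrangement you give, subtracting $g(\lambda)\,D(1/H_\lambda)=0$ from the expanded definition of $D$. Nothing to add.
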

\begin{proof}
The corollary follows directly from the definition of the operator $D$ and the last identity in the proof of Theorem \ref{th:sumhlambda}.
\end{proof}

\begin{thm} \label{th:diffq1}
Let $k$ be a given nonnegative integer and $\lambda$ be a strict
partition. Then
 there exist some $\xi_j\in \mathbb{Q}$ such that
$$
q_k(\lambda^{i+})-q_k(\lambda)=\sum_{j=0}^{k-1}\xi_j{\binom{x_i}{2}}^j
$$
for every strict partition $\lambda$ and every $i$.
\end{thm}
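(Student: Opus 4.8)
The plan is to reduce everything to a single uniform local computation by encoding $\lambda$ by a $0$--$1$ word. Put $w_0=1$ and, for $c\ge 1$, set $w_c=1$ if $c\in\{\lambda_1,\dots,\lambda_{\ell(\lambda)}\}$ and $w_c=0$ otherwise, so that $w=w_0w_1w_2\cdots$ is eventually $0$. For $p\ge 0$ let $\epsilon_p=1$ if $(w_p,w_{p+1})=(1,0)$, let $\epsilon_p=-1$ if $(w_p,w_{p+1})=(0,1)$, and let $\epsilon_p=0$ otherwise. Matching the inner corners of $\lambda$ with the positions $p$ where $\epsilon_p=1$ and the outer corners with the positions where $\epsilon_p=-1$ (the formal inner corner of content $x_0=1$, and the outer corner of content $y_1=1$ that occurs exactly when $\lambda_{\ell(\lambda)}=1$, being absorbed because $\binom{1}{2}=0$ and, for $k=0$, because their contributions cancel), I would first establish the key identity
\[
q_k(\lambda)=\sum_{p\ge 0}\epsilon_p\binom{p+1}{2}^{k}\qquad(k\ge 0).
\]

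Next I would analyse how the word changes under $\lambda\mapsto\lambda^{i+}$. Adding the box of content $x_i$ at inner corner $i$ means lengthening the unique row of length $x_i-1$ by one, or — when $x_i=1$ — creating a new bottom row of length $1$; in both cases only the letters $w_{x_i-1}$ and $w_{x_i}$ are affected, switching from $(1,0)$ to $(0,1)$ (for $x_i=1$ this is the degenerate switch $w_1\colon 0\mapsto 1$, with $w_0$ staying $1$). Hence only the pairs indexed by $p\in\{x_i-2,\,x_i-1,\,x_i\}$ can change their $\epsilon$-value, and a short check of the two sub-cases $w_{x_i-2}\in\{0,1\}$ and $w_{x_i+1}\in\{0,1\}$ shows that in every case $\Delta\epsilon_{x_i-2}=+1$, $\Delta\epsilon_{x_i-1}=-2$, $\Delta\epsilon_{x_i}=+1$ (the term $p=x_i-2$ being absent, but harmlessly so, when $x_i=1$ since then $\binom{x_i-1}{2}=0$). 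Substituting into the key identity gives, for every strict partition $\lambda$ and every admissible $i$,
\[
q_k(\lambda^{i+})-q_k(\lambda)=\binom{x_i-1}{2}^{k}-2\binom{x_i}{2}^{k}+\binom{x_i+1}{2}^{k}.
\]

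Finally I would show the right-hand side is a polynomial in $\binom{x_i}{2}$ of degree at most $k-1$ whose coefficients depend only on $k$. Writing $a=\binom{x_i}{2}$, $u=\binom{x_i+1}{2}$, $v=\binom{x_i-1}{2}$, one computes directly $u+v=2a+1$ and $uv=\frac{1}{4}(x_i+1)x_i(x_i-1)(x_i-2)=a^2-a$, so $u$ and $v$ are the two roots of $t^2-(2a+1)t+(a^2-a)$. By Newton's identities the power sums $p_k:=u^k+v^k$ satisfy $p_0=2$, $p_1=2a+1$, and $p_k=(2a+1)p_{k-1}-(a^2-a)p_{k-2}$ for $k\ge 2$, so each $p_k$ is a polynomial in $a$ with rational coefficients depending only on $k$; an easy induction on this recursion gives moreover $p_k=2a^{k}+(\text{terms of degree}\le k-1\text{ in }a)$. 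Hence $p_k-2a^k=\sum_{j=0}^{k-1}\xi_j a^{j}$ with $\xi_j=\xi_j(k)\in\mathbb{Q}$, which is precisely the asserted formula after setting $a=\binom{x_i}{2}$.

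The step I expect to be the real obstacle is the bookkeeping in the second paragraph: identifying the geometric move ``add a box at inner corner $i$'' with the word switch and, especially, making the treatment of content $1$ (the formal inner corner $x_0$, and the outer corner of content $1$ present exactly when $\lambda_{\ell(\lambda)}=1$) rigorous and uniform across all cases. Once the corner/word dictionary is in place, the sub-case analysis of $\Delta\epsilon$ and the Newton's-identity argument are routine.
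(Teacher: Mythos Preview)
Your proposal is correct and reaches exactly the same intermediate identity as the paper,
\[
q_k(\lambda^{i+})-q_k(\lambda)=\binom{x_i+1}{2}^{k}+\binom{x_i-1}{2}^{k}-2\binom{x_i}{2}^{k},
\]
but the two halves of your argument are packaged differently from the paper's.

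For the first half, the paper establishes the displayed identity by a direct six-case analysis of how the multiset of inner/outer corner contents changes under $\lambda\mapsto\lambda^{i+}$ (four cases for $1\le i\le m$ depending on whether the adjacent outer corners are immediately next to $x_i$, and two cases for $i=0$ depending on whether $y_1=2$). Your $0$--$1$ word encoding collapses these cases into a single local computation of $\Delta\epsilon_p$; this is tidier, though the bookkeeping at $x_i=1$ is slightly more delicate than you indicate: in that case $\Delta\epsilon_{0}=-1$ rather than $-2$, and the formula survives only because \emph{both} $\binom{x_i-1}{2}$ and $\binom{x_i}{2}$ vanish there, not just the former. This is a presentational wrinkle, not a gap.

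For the second half, the paper proves that $\binom{x_i+1}{2}^{k}+\binom{x_i-1}{2}^{k}-2\binom{x_i}{2}^{k}$ is a polynomial of degree at most $k-1$ in $\binom{x_i}{2}$ via the substitution $z\mapsto 2z-1$ in the even-power identity $(z+2)^{2k}+(z-2)^{2k}-2z^{2k}=2\sum_{j\ge 1}\binom{2k}{2j}4^{j}z^{2k-2j}$, followed by induction. Your Newton's-identities argument, observing that $u=\binom{x_i+1}{2}$ and $v=\binom{x_i-1}{2}$ satisfy $u+v=2a+1$ and $uv=a^2-a$ with $a=\binom{x_i}{2}$, is a cleaner and more conceptual route to the same conclusion; it also makes transparent why the leading terms cancel.
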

\begin{proof}
Denote by $X=\{x_0,x_1,\ldots, x_m\}$ and $Y=\{y_1, y_2, \ldots,
y_m\}$.
For $1\leq i\leq m$, four cases are to be considered. 

(i) If
$\beta_{i}+1<\beta_{i+1}$ and $\alpha_{i+1}+1<\alpha_{i}$. Then it
is easy to see that the contents of inner corners and outer corners
of $\lambda^{i+}$ are $X\cup \{x_i-1, x_i+1\} \setminus \{x_i\}$ and
$Y\cup \{x_i\}$ respectively. 

(ii) If $\beta_{i}+1=\beta_{i+1}$ and
$\alpha_{i+1}+1<\alpha_{i}$, so that $y_{i+1}=x_i+1$. Hence the
contents of inner corners and outer corners of $\lambda^{i+}$ are
$X\cup \{x_i-1\} \setminus \{x_i\}$ and $Y\cup \{x_i\}\setminus
\{x_i+1\}$ respectively. 

(iii) If $\beta_{i}+1<\beta_{i+1}$ and
$\alpha_{i+1}+1=\alpha_{i}$, so that $y_{i}=x_i-1$. Then the
contents of inner corners and outer corners of $\lambda^{i+}$ are
$X\cup \{x_i+1\} \setminus \{x_i\}$ and $Y\cup \{x_i\}\setminus
\{x_i-1\}$ respectively. 

(iv) If $\beta_{i}+1=\beta_{i+1}$ and
$\alpha_{i+1}+1=\alpha_{i}$. Then $y_{i}+1=x_i=y_{i+1}-1$. The
contents of inner corners and outer corners of $\lambda^{i+}$ are $X
\setminus \{x_i\}$ and $Y\cup \{x_i\}\setminus \{x_i-1, x_i+1\}$
respectively.

For $i=0$, two cases are to be considered.  

(v) If $y_1=2,$ the
contents of inner corners and outer corners of $\lambda^{0+}$ are
$X$ and $Y\cup \{1\}\setminus \{2\}$ respectively. 

(vi) If $y_1>2,$
the contents of inner corners and outer corners of $\lambda^{i+}$
are $X\cup\{2\}$ and $Y\cup \{1\}$ respectively.

In each of the six cases, we always have
\begin{equation}
q_k(\lambda^{i+})-q_k(\lambda)={\binom{x_i+1}{2}}^k+{\binom{x_i-1}{2}}^k
-2{\binom{x_i}{2}}^k.
\end{equation}

Next we have for all $z\in \mathbb{R}$,
\begin{equation*}(z+2)^{2k}+(z-2)^{2k}-2z^{2k}=2{\sum_{1\leq j
\leq {k}}\binom{2k}{2j}2^{2j}{z}^{2k-2j}}.
\end{equation*}
Replace $z$ by $2z-1,$ we obtain
\begin{equation*}
(2z+1)^{2k}+(2z-3)^{2k}-2(2z-1)^{2k}=2{\sum_{1\leq j \leq
{k}}\binom{2k}{2j}2^{2j}{(2z-1)}^{2k-2j}},
\end{equation*}
or
\begin{align*}
	&\left(8\binom{z+1}{2}+1\right)^{k}+\left(8\binom{z-1}{2}+1\right)^{k}-2\left(8\binom{z}{2}+1\right)^{k} \\
 = &2{\sum_{1\leq j \leq
{k}}\binom{2k}{2j}2^{2j}{\left(8\binom{z}{2}+1\right)}^{k-j}}.
\end{align*}
Then by induction on $k$ we have
\begin{equation*}
{\binom{x_i+1}{2}}^k+{\binom{x_i-1}{2}}^k
-2{\binom{x_i}{2}}^k=\sum_{j=0}^{k-1}\xi_j{\binom{x_i}{2}}^j
\end{equation*}
for some constants $\xi_j\in \mathbb{Q}$. 
\end{proof}

\begin{thm} \label{th:diffq2}
Let $\nu=(\nu_1, \nu_2, \ldots, \nu_\ell)$ be a partition. Then
there exist some $\xi_{\delta}\in \mathbb{Q}$ such that
\begin{equation}\label{eq:Dnu}
D\left(\frac{q_{\nu}(\lambda)}{H_{\lambda}}\right)= \sum_{|\delta|\leq
|\nu|-1}\xi_{\delta}\frac{q_\delta(\lambda)}{H_{\lambda}}
\end{equation}
for every strict partition $\lambda$.
\end{thm}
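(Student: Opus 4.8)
The plan is to reduce $D\bigl(q_\nu(\lambda)/H_\lambda\bigr)$ to the corner data of $\lambda$ via Corollary \ref{th:sumhlambda*}, expand the product $q_\nu=q_{\nu_1}\cdots q_{\nu_\ell}$ using Theorem \ref{th:diffq1}, and recognize the resulting corner sums through Theorem \ref{th:aibi}. Throughout write $a_i=\binom{x_i}{2}$ and $b_i=\binom{y_i}{2}$, so that $q_k(\lambda)=q_k(\{a_i\},\{b_i\})$. First I would record from Theorem \ref{th:diffq1} that for each part $\nu_j$ there is a polynomial $r_j(t)=\sum_p \xi^{(j)}_p t^p$ of degree at most $\nu_j-1$, with rational coefficients independent of $\lambda$ and of the box added, such that $q_{\nu_j}(\lambda^{i+})=q_{\nu_j}(\lambda)+r_j(a_i)$ whenever $\lambda^{i+}$ is defined. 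Expanding the product over $j$ and deleting the term in which no factor is perturbed gives
$$q_\nu(\lambda^{i+})-q_\nu(\lambda)=\sum_{\emptyset\ne S\subseteq\{1,\ldots,\ell\}} q_{\nu\setminus S}(\lambda)\prod_{j\in S}r_j(a_i),$$
where $\nu\setminus S$ denotes the partition whose parts are the $\nu_j$ with $j\notin S$. The factor $\prod_{j\in S}r_j(a_i)$ is a polynomial in $a_i$ of degree at most $\sum_{j\in S}\nu_j-|S|\le\sum_{j\in S}\nu_j-1$, the last step using $S\ne\emptyset$; this small ``$-1$'' is what eventually forces the total degree below $|\nu|$.

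Next I would feed this expansion into Corollary \ref{th:sumhlambda*}. Since $q_{\nu\setminus S}(\lambda)$ does not depend on the box added, it comes out of the sum over $\lambda^+$, and for each monomial $a_i^p$ what remains is
$$\sum_{\ell(\lambda^+)>\ell(\lambda)}\frac{a_i^p}{H_{\lambda^+}}+2\sum_{\ell(\lambda^+)=\ell(\lambda)}\frac{a_i^p}{H_{\lambda^+}}=\frac{1}{H_\lambda}\sum_{0\le i\le m}\frac{\prod_{1\le j\le m}(a_i-b_j)}{\prod_{0\le j\le m,\,j\ne i}(a_i-a_j)}\,a_i^p,$$
where I invoke Theorem \ref{th:Hlambda} exactly as in the proof of Theorem \ref{th:sumhlambda}; when $y_1=1$ the box $\square_0$ does not exist, but then $a_0=b_1=\binom{1}{2}=0$, so the $i=0$ term on the right vanishes by itself and no correction is needed. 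By Theorem \ref{th:aibi} the right-hand side equals $\frac{1}{H_\lambda}\sum_{|\sigma|\le p}\eta_\sigma\,q_\sigma(\lambda)$ for universal rationals $\eta_\sigma$.

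Assembling the pieces, every contribution to $D\bigl(q_\nu(\lambda)/H_\lambda\bigr)$ has the form $\frac{1}{H_\lambda}\,q_{\nu\setminus S}(\lambda)\,q_\sigma(\lambda)$ with $|\sigma|\le p\le\sum_{j\in S}\nu_j-1$. Writing $q_{\nu\setminus S}(\lambda)q_\sigma(\lambda)=q_\delta(\lambda)$ for $\delta=(\nu\setminus S)\cup\sigma$ and estimating $|\delta|=|\nu|-\sum_{j\in S}\nu_j+|\sigma|\le|\nu|-1$ yields \eqref{eq:Dnu}. The main obstacle, and the only place real care is needed, is the degree bookkeeping: one must check that $q_{\nu\setminus S}$ carries weight exactly $|\nu|-\sum_{j\in S}\nu_j$ so that it compensates precisely for the parts removed, that the ``$-|S|\le-1$'' loss in the degree of $\prod_{j\in S}r_j$ is not lost when that polynomial is re-expressed through Theorem \ref{th:aibi}, and that all constants produced by Theorems \ref{th:diffq1} and \ref{th:aibi} are independent of $\lambda$, so that one fixed linear combination works simultaneously for every strict partition.
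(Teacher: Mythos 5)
Your proposal is correct and follows essentially the same route as the paper: expand $q_\nu(\lambda^{i+})-q_\nu(\lambda)$ over nonempty subsets of parts (the paper's sum over pairs $(U,V)$ with $V\neq\emptyset$), pass through Corollary \ref{th:sumhlambda*} and Theorem \ref{th:Hlambda}, and finish with Theorems \ref{th:diffq1} and \ref{th:aibi}. You merely make explicit the degree bookkeeping (the loss of $|S|\ge 1$ in $\prod_{j\in S}r_j$) and the vanishing of the $i=0$ term when $y_1=1$, both of which the paper leaves implicit.
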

\begin{proof} 
For $0\leq i \leq m$, we have
\begin{equation}\label{eq:delta:q}
\prod_{k=1}^{\ell}q_{\nu_k}(\lambda^{i+})-\prod_{k=1}^{\ell}q_{\nu_k}(\lambda)
=   
\sum\limits_{(*)}\prod_{k\in U}q_{\nu_k}(\lambda)
{\prod_{k'\in V} \left(q_{\nu_{k'}}(\lambda^{i+})-q_{\nu_{k'}}(\lambda)\right)},
\end{equation}
where the sum $(*)$ ranges over all
pairs $(U,V)$ of positive integer sets such that $U\cup V=\{1,2,\ldots,\ell\},\,  U\cap
V=\emptyset$ and $V\neq \emptyset$. Actually the Identity \eqref{eq:delta:q} follows by the inclusion-exclusion principle.
By Corollary \ref{th:sumhlambda*} and Theorem \ref{th:Hlambda} we have 
\begin{align*}
&H_{\lambda}D\bigl(\frac{q_{\nu}(\lambda)}{H_{{\lambda}}}\bigr)
=
\sum\limits_{{\ell(\lambda^+)>\ell(\lambda)}}\frac{H_{\lambda}\left(q_{\nu}(\lambda^+)-q_{\nu}(\lambda)\right)}{H_{{\lambda^{+}}}}
+2\sum\limits_{{\ell(\lambda^+)=\ell(\lambda)}}\frac{H_{\lambda}\left(q_{\nu}(\lambda^{+})-q_{\nu}(\lambda)\right)}{H_{{\lambda^+}}}\\
&=
\sum_{0\leq i\leq m} \frac{\prod\limits_{\substack{1\leq
j\leq m}}\left({\binom{x_i}{2}}-{\binom{y_j}{2}}\right)}{\prod\limits_{\substack{0\leq j\leq m\\
j\neq i}}\left({\binom{x_i}{2}}-{\binom{x_j}{2}}\right)}\left(\prod_{k=1}^{\ell}q_{\nu_k}(\lambda^{i+})-\prod_{k=1}^{\ell}q_{\nu_k}(\lambda)\right)\\
&=   
\sum\limits_{(*)}\prod_{k\in U}q_{\nu_k}(\lambda)\sum_{0\leq i\leq m} \frac{\prod\limits_{\substack{1\leq
j\leq m}}\left({\binom{x_i}{2}}-{\binom{y_j}{2}}\right)}{\prod\limits_{\substack{0\leq j\leq m\\
j\neq i}}\left({\binom{x_i}{2}}-{\binom{x_j}{2}}\right)}
{\prod_{k'\in V} \left(q_{\nu_{k'}}(\lambda^{i+})-q_{\nu_{k'}}(\lambda)\right)}.
\end{align*}
Then the claim follows from Theorems 
~\ref{th:diffq1}
and \ref{th:aibi}.
\end{proof}

\bigskip


\section{Proofs of Theorems} 

Instead of proving Theorem \ref{th:polynomial:noq}, we prove the following
more general result, which 
implies Theorem \ref{th:polynomial:noq}
when $\nu=\emptyset$.

\begin{thm} \label{th:contentmain}
Suppose that $\nu=(\nu_1, \nu_2, \ldots, \nu_\ell)$ is a given partition,  $\mu$ is a given strict partition  and $Q$ is a symmetric
function. Then there exists some $r\in \mathbb{N}$ such that
$$
D^r\left( \frac{Q\left(\binom{c_{\square}}{2}:
{\square}\in\lambda\right)q_{\nu}(\lambda)}{H_\lambda} \right)=0
$$
for every strict partition $\lambda$.
Consequently,
\begin{align*}
P(n)=\sum_{|\lambda/\mu|=n}\frac{2^{|\lambda|-|\mu|-\ell(\lambda)+\ell(\mu)}f_{\lambda/\mu}}{H_{\lambda}}Q\left(\binom{c_{\square}}{2}:
{\square}\in\lambda\right)q_{\nu}(\lambda)
\end{align*}
is a polynomial in $n$.
\end{thm}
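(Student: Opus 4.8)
The plan is to prove the pointwise statement $D^{r}g\equiv 0$ for some $r\in\mathbb{N}$, where
\[
g(\lambda):=\frac{Q\bigl(\binom{c_{\square}}{2}:{\square}\in\lambda\bigr)\,q_{\nu}(\lambda)}{H_{\lambda}},
\]
and then to read off the polynomiality of $P(n)$ directly: since $P(n)=\sum_{|\lambda/\mu|=n}2^{|\lambda|-|\mu|-\ell(\lambda)+\ell(\mu)}f_{\lambda/\mu}\,g(\lambda)$ is precisely the left-hand side of \eqref{eq:telescope*}, Theorem~\ref{th:telescope} gives $P(n)=\sum_{k=0}^{r-1}\binom{n}{k}D^{k}g(\mu)$, a polynomial in $n$ of degree at most $r-1$. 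So everything reduces to local nilpotency of $D$ on $g$. It is convenient to first eliminate the symmetric function $Q$: writing $P_{k}(\lambda):=\sum_{\square\in\lambda}\binom{c_{\square}}{2}^{k}$ for $k\geq 1$, the substitution in the statement carries the power-sum generator $p_{k}$ to $P_{k}(\lambda)$, and since $Q$ is a polynomial in $p_{1},\dots,p_{d}$ with $d=\deg Q$, we have $Q\bigl(\binom{c_{\square}}{2}:{\square}\in\lambda\bigr)=\widetilde{Q}\bigl(P_{1}(\lambda),\dots,P_{d}(\lambda)\bigr)$ for a polynomial $\widetilde{Q}$. Thus $g$ is a $\mathbb{Q}$-linear combination of functions $m(\lambda)/H_{\lambda}$, where $m$ ranges over monomials $\prod_{a}P_{k_{a}}(\lambda)\cdot\prod_{b}q_{\mu_{b}}(\lambda)$ in the $P_{k}$'s ($k\geq 1$) and the $q_{k}$'s, and it suffices to kill each such $m/H_{\lambda}$ by a power of $D$.

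The computational engine is the one used in the proof of Theorem~\ref{th:diffq2}: by Corollary~\ref{th:sumhlambda*} together with Theorem~\ref{th:Hlambda},
\[
H_{\lambda}\,D\!\left(\frac{m(\lambda)}{H_{\lambda}}\right)
=\sum_{0\leq i\leq m}\frac{\prod_{1\leq j\leq m}\bigl(\binom{x_{i}}{2}-\binom{y_{j}}{2}\bigr)}{\prod_{\substack{0\leq j\leq m\\ j\neq i}}\bigl(\binom{x_{i}}{2}-\binom{x_{j}}{2}\bigr)}\bigl(m(\lambda^{i+})-m(\lambda)\bigr).
\]
Expanding $m(\lambda^{i+})-m(\lambda)$ telescopically over the factors of $m$ yields a sum over pairs $(U,V)$ of factor-index sets with $V\neq\emptyset$ of terms $\prod_{k\in U}(\text{factor}_{k}(\lambda))\cdot\prod_{k'\in V}\bigl(\text{factor}_{k'}(\lambda^{i+})-\text{factor}_{k'}(\lambda)\bigr)$. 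Since $\lambda^{i+}$ adds exactly one box of content $x_{i}$, one has $P_{k}(\lambda^{i+})-P_{k}(\lambda)=\binom{x_{i}}{2}^{k}$, while Theorem~\ref{th:diffq1} gives $q_{k}(\lambda^{i+})-q_{k}(\lambda)=\sum_{j=0}^{k-1}\xi_{j}\binom{x_{i}}{2}^{j}$; hence each difference-factor, and therefore $\prod_{k'\in V}(\cdots)$, is a polynomial in $\binom{x_{i}}{2}$. Feeding its monomials $\binom{x_{i}}{2}^{t}$ into Theorem~\ref{th:aibi} with $a_{i}=\binom{x_{i}}{2}$ and $b_{i}=\binom{y_{i}}{2}$ (so that $q_{\delta}(\{a_{i}\},\{b_{i}\})=q_{\delta}(\lambda)$) rewrites $\sum_{i}(\text{kernel})\binom{x_{i}}{2}^{t}$ as $\sum_{|\delta|\leq t}\xi_{\delta}\,q_{\delta}(\lambda)$. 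Consequently $H_{\lambda}D(m/H_{\lambda})$ is again a $\mathbb{Q}$-combination of monomials in the $P_{k}$'s and $q_{k}$'s, so the $\mathbb{Q}$-span of the functions $m/H_{\lambda}$ is stable under $D$.

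The heart of the matter — and the step I expect to be the main obstacle — is a weight bound forcing $D$ to be locally nilpotent on that span. Assign weight $k$ to $q_{k}$, weight $k+1$ to $P_{k}$, and to a monomial the sum of the weights of its factors. I would verify that every monomial produced above from $m$ has weight strictly smaller than that of $m$: in the term indexed by $(U,V)$, the $U$-factors retain their total weight; a changed factor $q_{\nu_{k'}}$ contributes a difference of degree $\leq\nu_{k'}-1$ in $\binom{x_{i}}{2}$ and a changed factor $P_{k'}$ a difference of degree $k'$, and after Theorem~\ref{th:aibi} the $V$-part becomes a combination of $q_{\delta}$'s of weight at most the sum of those degrees; comparing with the original weight $\sum_{k'\in V}(\text{weight of factor}_{k'})$ of the $V$-factors, the weight drops by at least one unit for each changed factor, hence by at least $|V|\geq 1$ altogether. (Giving $P_{k}$ weight $k+1$ rather than $k$ is exactly what makes this drop strict when a single $P$-factor changes.) Therefore, taking $r$ larger than the greatest weight of a monomial occurring in $g$, repeated application of $D$ reduces every monomial to weight $0$, i.e.\ to a scalar multiple of $1/H_{\lambda}$, which is annihilated by one further $D$ by Theorem~\ref{th:sumhlambda}. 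Hence $D^{r}g\equiv 0$, and the polynomiality of $P(n)$ follows as explained in the first paragraph.
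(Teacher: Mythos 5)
Your proof is correct and follows essentially the same route as the paper: reduce $Q$ to products of power sums in $\binom{c_{\square}}{2}$, expand $H_{\lambda}D(m/H_{\lambda})$ over the pairs $(U,V)$ via Corollary \ref{th:sumhlambda*} and Theorem \ref{th:Hlambda}, and convert the $V$-part into $q_{\delta}(\lambda)$'s using Theorems \ref{th:diffq1} and \ref{th:aibi}. The only difference is bookkeeping: where the paper terminates by a lexicographic induction on the pair (number of power-sum factors, $|\nu|$), you use the weight $\mathrm{wt}(q_k)=k$, $\mathrm{wt}(P_k)=k+1$, which drops by at least $|V|\geq 1$ at each step --- an equivalent argument with the small added benefit of an explicit bound on $r$.
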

\begin{proof}
 By linearity we can assume that
$$Q\left(\binom{c_{\square}}{2}:
{\square}\in\lambda\right)=\prod_{t=1}^s\sum_{\square\in
\lambda}{\binom{c_{\square}}{2}}^{r_t}$$ 
for some tuple $(r_1, r_2, \ldots, r_s)$.
Let
\begin{align*}
	A&= q_\nu(\lambda),\\
	\Delta_i A&= q_\nu(\lambda^{i+}) -q_\nu(\lambda),\\
	B&= 
\prod_{t=1}^s\sum_{\square\in \lambda}{\binom{c_{\square}}{2}}^{r_t},\\
	\Delta_i B
&= 
\prod_{t=1}^s\sum_{\square\in \lambda^{i+}}{\binom{c_{\square}}{2}}^{r_t}-
\prod_{t=1}^s\sum_{\square\in \lambda}{\binom{c_{\square}}{2}}^{r_t}.
\end{align*}
We have
\begin{align*}
	\Delta_i A
 &= \sum\limits_{(*)}\prod_{k\in U}q_{\nu_k}(\lambda)
{\prod_{k'\in V} \left(q_{\nu_{k'}}(\lambda^{i+})-q_{\nu_{k'}}(\lambda)\right)},\\
	\Delta_i B
&= \sum\limits_{(**)}\prod_{t\in U}\sum_{\square\in \lambda}{\binom{c_{\square}}{2}}^{r_t}
{\prod_{t'\in V} \left(
\sum_{\square\in \lambda^{i+}}{\binom{c_{\square}}{2}}^{r_{t'}}-
\sum_{\square\in \lambda}{\binom{c_{\square}}{2}}^{r_{t'}}
\right)}\\
&= \sum\limits_{(**)}\prod_{t\in U}\sum_{\square\in \lambda}{\binom{c_{\square}}{2}}^{r_t}
{\prod_{t'\in V} 
{\binom{x_i}{2}}^{r_{t'}}
},
\end{align*}
where the sum $(*)$ (resp. $(**)$) ranges over all
pairs $(U,V)$ of positive integer sets such that 
$U\cup V=\{1,2,\ldots,\ell\}$
(resp. $U\cup V=\{1,2,\ldots,s\}$), 
$U\cap V=\emptyset$ and $V\neq \emptyset$.
It follows from 
Corollary \ref{th:sumhlambda*} 
and 
Theorem \ref{th:Hlambda}
that
\begin{align*}
 \ &\ \ \ \ \ H_\lambda D\left( \frac{q_{\nu}(\lambda)\prod_{t=1}^s\sum_{\square\in
\lambda}{\binom{c_{\square}}{2}}^{r_t}}{H_\lambda}
\right)
\\
&=
\sum\limits_{\ell(\lambda^+)>\ell(\lambda)}
\frac{H_{\lambda}}{H_{\lambda^{+}}} \left(q_{\nu}(\lambda^+)\prod_{t=1}^s\sum_{\square\in
\lambda^+}{\binom{c_{\square}}{2}}^{r_t}-
q_{\nu}(\lambda)\prod_{t=1}^s\sum_{\square\in
\lambda}{\binom{c_{\square}}{2}}^{r_t}\right)
\\
&
\ \ \
+2\sum\limits_{\ell(\lambda^+)=\ell(\lambda)}\frac{H_{\lambda}}{H_{{\lambda^+}}}\left(q_{\nu}(\lambda^+)\prod_{t=1}^s\sum_{\square\in
\lambda^+}{\binom{c_{\square}}{2}}^{r_t}-
q_{\nu}(\lambda)\prod_{t=1}^s\sum_{\square\in
\lambda}{\binom{c_{\square}}{2}}^{r_t}\right)
\\
&=
\sum_{ i=0}^m \frac{\prod\limits_{\substack{1\leq
j\leq m}}\left({\binom{x_i}{2}}-{\binom{y_j}{2}}\right)}{\prod\limits_{\substack{0\leq j\leq m\\
j\neq i}}\left({\binom{x_i}{2}}-{\binom{x_j}{2}}\right)}\left(q_{\nu}(\lambda^{i+})\prod_{t=1}^s\sum_{\square\in
\lambda^{i+}}{\binom{c_{\square}}{2}}^{r_t}-
q_{\nu}(\lambda)\prod_{t=1}^s\sum_{\square\in
\lambda}{\binom{c_{\square}}{2}}^{r_t}\right)\\
&=
\sum_{ i=0}^m \frac{\prod\limits_{\substack{1\leq
j\leq m}}\left({\binom{x_i}{2}}-{\binom{y_j}{2}}\right)}{\prod\limits_{\substack{0\leq j\leq m\\
j\neq i}}\left({\binom{x_i}{2}}-{\binom{x_j}{2}}\right)}\
\left(
A\cdot \Delta_i B + B\cdot  \Delta_i A + \Delta_i A\cdot \Delta_i B
\right).
\end{align*}
Then by Theorems  \ref{th:aibi}, \ref{th:diffq1},
and \ref{th:diffq2}  each of the above three terms could be written
as a linear combination of some $\prod_{{\bar t}=1}^{\bar s}\sum_{\square\in
\lambda}{\binom{c_{\square}}{2}}^{{\bar r}_{\bar t}}q_{\bar \nu}(\lambda)$ 
satisfying one of the following two conditions:

(1) $\bar s<s$;

(2) $\bar s = s$ and $|\bar \nu| \leq  |\nu|-1$.

\noindent
Therefore the theorem follows by induction on $s$ and $|\nu|$.
\end{proof}

\begin{proof}[Proof of Theorem \ref{th:polynomial*}]
The special case
in the proof of Theorem \ref{th:contentmain}
with $\nu=\emptyset$ and $s=1$ yields
\begin{align*}
H_\lambda D\left( \frac{\sum_{\square\in
\lambda}{\binom{c_{\square}}{2}}^{r_1}}{H_\lambda}
\right)
&=
\sum_{0\leq i\leq m} \frac{\prod\limits_{\substack{1\leq
j\leq m}}\left({\binom{x_i}{2}}-{\binom{y_j}{2}}\right)}{\prod\limits_{\substack{0\leq j\leq m\\
j\neq i}}\left({\binom{x_i}{2}}-{\binom{x_j}{2}}\right)}\
\binom{x_i}{2}^{r_1}\\
& =\sum_{|\nu|\leq {r_1}}\xi_{\nu}q_\nu(\lambda),
\end{align*}
where $\xi_\nu$ are some constants.
The last equality is due to Theorem \ref{th:aibi}.
Notice that
$$(2k)!\binom{z+k-1}{2k}=2^k\prod_{i=1}^k\left(\binom{z}{2}-\binom{i}{2}\right).$$
Then by Theorems \ref{th:diffq2} and \ref{th:telescope} 
we know that
 \begin{align*}
P(n)=\sum_{|\lambda|=n}\frac{{f'}_{\lambda}}{H_{\lambda}}\sum_{\square\in\lambda}
\binom{c_{\square}+k-1}{2k}
\end{align*}
is a polynomial in $n$ with degree at most $k+1$. 

On the other hand,
$$
P(k+1)=\frac{{f'}_{(k+1)}}{H_{(k+1)}}\binom{2k}{2k}=\frac{2^k}{(k+1)!}
$$
since $\lambda=(k+1)$ is the only strict partition with size $k+1$ who has
contents  greater than~$k$. 
Moreover, it is obvious
that $P(0)=P(1)=\cdots=P(k)=0$. 
Since the polynomial $P(n)$ is uniquely determined by those values,
we obtain
$$P(n)=\frac{2^k}{(k+1)!}\binom{n}{k+1}.$$ 
\end{proof}

By Theorem \ref{th:contentmain}, the left-hand side of \eqref{eq:sumc:k=1} is a polynomial in~$n$. To evaluate this polynomial explicitly, 
 we need the following lemma. 
\begin{lem} \label{th:q1}
 Let $\lambda$ be a strict partition. Then $$q_1(\lambda)=|\lambda|.$$
\end{lem}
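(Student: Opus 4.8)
The plan is to compute $q_1(\lambda) = \sum_{i=0}^m \binom{x_i}{2} - \sum_{i=1}^m \binom{y_i}{2}$ directly from the corner data, and to show it telescopes to $|\lambda|$. Recall that $\binom{z}{2} = \frac{z(z-1)}{2}$, so $q_1(\lambda) = \frac12\bigl(\sum_{i=0}^m x_i(x_i-1) - \sum_{i=1}^m y_i(y_i-1)\bigr)$. The key structural fact is that the interlacing $x_0 \le y_1 < x_1 < y_2 < x_2 < \cdots < y_m < x_m$ comes from the shifted Young diagram: between consecutive inner corners the boundary of $\lambda$ descends through the outer corner, so each strip of the diagram is controlled by a pair $(x_{i-1}, y_i)$ or $(y_i, x_i)$. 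I would translate the size $|\lambda|$ into these coordinates by summing the areas of the vertical/horizontal strips cut out by the corner lattice, which should produce exactly the alternating sum of $\binom{x_i}{2}$ and $\binom{y_i}{2}$ terms after accounting for the shift.

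More concretely, the cleanest route is probably induction on $|\lambda|$ using the difference operator machinery already in place. When $|\lambda|=0$ (so $\lambda=\emptyset$), the only corner is the inner corner of content $x_0 = 1$, giving $q_1(\emptyset) = \binom{1}{2} = 0 = |\emptyset|$. For the inductive step, I would use Theorem \ref{th:aibi} in the form \eqref{eq:ab1}: setting $a_i = \binom{x_i}{2}$ and $b_i = \binom{y_i}{2}$, identity \eqref{eq:ab1} says precisely
\begin{equation*}
\sum_{0\le i\le m}\frac{\prod_{1\le j\le m}\bigl(\binom{x_i}{2}-\binom{y_j}{2}\bigr)}{\prod_{0\le j\le m,\, j\ne i}\bigl(\binom{x_i}{2}-\binom{x_j}{2}\bigr)}\binom{x_i}{2} = q_1(\lambda).
\end{equation*}
Combined with Corollary \ref{th:sumhlambda*} applied to $g(\lambda) = \sum_{\square\in\lambda}\binom{c_\square}{2}$, and the fact that adding a box of content $x_i$ changes $g$ by exactly $\binom{x_i}{2}$, this shows $D\bigl(\tfrac{1}{H_\lambda}\sum_{\square\in\lambda}\binom{c_\square}{2}\bigr) = \tfrac{1}{H_\lambda}q_1(\lambda)$. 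On the other hand, $D\bigl(\tfrac{|\lambda|}{H_\lambda}\bigr) = \tfrac{1}{H_\lambda}$ since $D(1/H_\lambda)=0$ (Theorem \ref{th:sumhlambda}) and $|\lambda^+| - |\lambda| = 1$ for every $\lambda^+$ while the weights $\bigl(\sum_{\ell(\lambda^+)>\ell(\lambda)}1\bigr) + 2\bigl(\sum_{\ell(\lambda^+)=\ell(\lambda)}1\bigr)$ sum to $1$ by \eqref{eq:ab0}. So it suffices to check $q_1(\lambda) = |\lambda|$ for a single base case and then propagate.

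Actually, the most robust approach avoids induction entirely: I would just verify the identity $\sum_{i=0}^m\binom{x_i}{2} - \sum_{i=1}^m\binom{y_i}{2} = |\lambda|$ by a geometric decomposition of the shifted diagram. Writing $\lambda = (\lambda_1 > \cdots > \lambda_\ell)$, each row $k$ contributes $\lambda_k$ boxes at contents $k+1, k+2, \ldots$; summing $\sum_{k=1}^\ell \lambda_k$ and re-expressing via the corner coordinates $x_i, y_i$ (which record where rows of equal "reach" begin and end) should give the stated alternating sum directly, using $\sum_{c=1}^{N}(\text{stuff}) $ telescoping through $\binom{N+1}{2}$. The main obstacle I anticipate is purely bookkeeping: correctly matching which corners bound which rectangular blocks of the diagram, and handling the boundary convention $x_0 = 1$, $\beta_0 = \ell(\lambda)+1$, $\alpha_{m+1}=0$ cleanly — in particular the degenerate case $x_0 = y_1 = 1$ when $\lambda_{\ell(\lambda)} = 1$. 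Given how much corner-arithmetic infrastructure the paper has already set up, I expect the authors' proof to be a short direct computation of this kind rather than the inductive argument.
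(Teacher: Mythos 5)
Your final, preferred route (the direct geometric computation) is exactly the paper's proof, and it is the right one; but as written you never actually carry out the one step that \emph{is} the proof. The entire content of the lemma is the observation that the row lengths of $\lambda$, read from the bottom row upward, are precisely the consecutive integers $y_i, y_i+1, \ldots, x_i-1$ for $i=1,\ldots,m$: one has $y_i=\lambda_{\alpha_i}$ for the row containing the $i$-th outer corner, the rows $\alpha_{i+1}+1,\ldots,\alpha_i-1$ in between carry no outer corner and hence have lengths increasing by exactly $1$ as one moves up, and the topmost row of that block has length $y_i+(\alpha_i-\alpha_{i+1}-1)=x_i-1$ by the definition $x_i=\beta_i-\alpha_{i+1}$. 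Hence $|\lambda|=\sum_{i=1}^m\bigl(y_i+(y_i+1)+\cdots+(x_i-1)\bigr)=\sum_{i=1}^m\bigl(\binom{x_i}{2}-\binom{y_i}{2}\bigr)=q_1(\lambda)$, the last equality because $\binom{x_0}{2}=\binom{1}{2}=0$. You name all the ingredients (corners recording where blocks of rows begin and end, telescoping through binomial coefficients) but explicitly defer this identification as ``bookkeeping''; since it is the whole argument, the write-up has a gap, though an easily fillable one. A small slip: the boxes of row $k$ have contents $1,2,\ldots,\lambda_k$, not $k+1,k+2,\ldots$ (you are quoting column indices, not contents $j-k$), but this does not affect the computation of $|\lambda|$.

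The fallback inductive route in your middle paragraph does not work as stated. The two facts $D\bigl(\sum_{\square\in\lambda}\binom{c_\square}{2}/H_\lambda\bigr)=q_1(\lambda)/H_\lambda$ and $D\bigl(|\lambda|/H_\lambda\bigr)=1/H_\lambda$ give no relation between $q_1(\lambda)$ and $|\lambda|$. Moreover, even if you could show that $h(\lambda):=\bigl(q_1(\lambda)-|\lambda|\bigr)/H_\lambda$ satisfies $Dh=0$, the equation $Dh=0$ together with $h(\emptyset)=0$ does not force $h\equiv 0$: the operator $D$ expresses $h(\lambda)$ as a weighted sum of the values $h(\lambda^+)$, so information propagates from larger partitions down to smaller ones, not the other way around. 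Already at $\lambda=(2)$ one gets the single relation $h((2,1))+2h((3))=h((2))$, which cannot determine $h((2,1))$ and $h((3))$ separately. So ``check a base case and propagate'' is not available here; the direct computation is genuinely needed.
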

\begin{proof}
By the definition of the size of $\lambda$, we have
\begin{align*}
|\lambda|
=\sum_{i=1}^m\sum_{j=y_i}^{x_i-1}j
=\sum_{i=1}^{m}\left(\binom{x_i}{2}-\binom{y_i}{2}\right)=q_1(\lambda). 
\end{align*}
\end{proof}

\begin{proof}[Proof of Theorem \ref{th:k=1}]
It is easy to check that both sides of \eqref{eq:sumc:k=1} are equal for $n  =0, 1, 2$.
By Corollary \ref{th:sumhlambda*}, Theorem \ref{th:Hlambda} and Identity \eqref{eq:ab1} it is easy to see that 

\begin{align*}
 H_{\lambda}D\left(\frac{\sum_{\square\in\lambda }\binom{c_{\square}}{2}}{H_{\lambda}}\right)
 &=
 \sum\limits_{{\ell(\lambda^+)>\ell(\lambda)}}\frac{H_{\lambda}}{H_{{\lambda^{+}}}}\left(\sum_{\square\in\lambda^+ }\binom{c_{\square}}{2}-\sum_{\square\in\lambda }\binom{c_{\square}}{2}\right)
 \\
&\ \ \ +2\sum\limits_{{\ell(\lambda^+)=\ell(\lambda)}}\frac{H_{\lambda}}{H_{{\lambda^+}}}\left(\sum_{\square\in\lambda^+ }\binom{c_{\square}}{2}-\sum_{\square\in\lambda }\binom{c_{\square}}{2}\right)
\\
&=
\sum_{0\leq i\leq m} \frac{\prod\limits_{\substack{1\leq
j\leq m}}\left({\binom{x_i}{2}}-{\binom{y_j}{2}}\right)}{\prod\limits_{\substack{0\leq j\leq m\\
j\neq i}}\left({\binom{x_i}{2}}-{\binom{x_j}{2}}\right)}\binom{x_i}{2}
 \\&
 =q_1(\lambda)\\
 &
 =|\lambda|.
 \end{align*}
 Therefore we have
 \begin{align*}
H_{\lambda}D^2\left(\frac{\sum_{\square\in\lambda }\binom{c_{\square}}{2}}{H_{\lambda}}\right)
&=
1,
\\
H_{\lambda}D^3\left(\frac{\sum_{\square\in\lambda }\binom{c_{\square}}{2}}{H_{\lambda}}\right)
&=
0.
 \end{align*}
Then our claim follows from Theorem \ref{th:telescope}.
\end{proof}
\medskip
Similarly, by \eqref{eq:ab0}, \eqref{eq:ab1} and \eqref{eq:ab2} we have
 \begin{align*}
 H_{\lambda}D\left(\frac{\sum_{\square\in\lambda }\binom{c_{\square}+1}{4}}{H_{\lambda}}\right)
 &=
 \frac1{12}\left( q_2(\lambda)+|\lambda|^2-2|\lambda|\right),
 \\
H_{\lambda}D^2\left(\frac{\sum_{\square\in\lambda }\binom{c_{\square}+1}{4}}{H_{\lambda}}\right)
 &=
 \frac23|\lambda|,
 \\
 H_{\lambda}D^3\left(\frac{\sum_{\square\in\lambda }\binom{c_{\square}+1}{4}}{H_{\lambda}}\right)
 &=
 \frac23,
 \\
 H_{\lambda}D^4\left(\frac{\sum_{\square\in\lambda }\binom{c_{\square}+1}{4}}{H_{\lambda}}\right)
 &=
 0.
\end{align*}
Thus by Theorem \ref{th:telescope} we obtain the following result.
\begin{thm} \label{th:k=2}
Let $\mu$ be a strict partition. Then 
  \begin{align*}
& \sum_{|\lambda/\mu|=n}\frac{2^{|\lambda|-\ell(\lambda)-|\mu|+\ell(\mu)}f_{\lambda/\mu}H_\mu}{H_{\lambda}}\left(\sum_{\square\in\lambda}\binom{c_{\square}+1}{4}-\sum_{\square\in\mu}
\binom{c_{\square}+1}{4}\right)
\\
=\ & \frac23\binom{n}{3}+\frac23|\mu|\binom{n}{2}+\frac1{12}\left( q_2(\mu)+|\mu|^2-2|\mu|\right)n.
\end{align*}
\end{thm}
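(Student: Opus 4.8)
The plan is to follow the same route as the proof of Theorem~\ref{th:k=1}, applying the telescoping identity \eqref{eq:telescope*} of Theorem~\ref{th:telescope} to the function
$$
g(\lambda)=\frac{1}{H_\lambda}\sum_{\square\in\lambda}\binom{c_\square+1}{4}.
$$
The four displayed computations immediately preceding the statement record that $H_\lambda Dg(\lambda)=\frac1{12}\bigl(q_2(\lambda)+|\lambda|^2-2|\lambda|\bigr)$, $H_\lambda D^2g(\lambda)=\frac23|\lambda|$, $H_\lambda D^3g(\lambda)=\frac23$, and $D^4g(\lambda)=0$ for every strict partition $\lambda$; these are precisely the inputs needed, and they are obtained exactly as in the proofs of Theorems~\ref{th:k=1} and \ref{th:contentmain}, via Corollary~\ref{th:sumhlambda*}, Theorem~\ref{th:Hlambda}, and identities \eqref{eq:ab0}--\eqref{eq:ab2} together with Theorem~\ref{th:diffq1}. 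Since $D$ is linear and annihilates the zero function, $D^kg\equiv 0$ for all $k\ge 4$, so \eqref{eq:telescope*} truncates to
$$
\sum_{|\lambda/\mu|=n}{f'}_{\lambda/\mu}\,g(\lambda)=\sum_{k=0}^{3}\binom{n}{k}D^kg(\mu),
$$
which is manifestly a polynomial in $n$ of degree at most $3$, in accordance with Theorem~\ref{th:contentmain}.

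Next I would evaluate the four coefficients by substituting $\lambda=\mu$ into the formulas above, namely $D^0g(\mu)=\frac1{H_\mu}\sum_{\square\in\mu}\binom{c_\square+1}{4}$, $Dg(\mu)=\frac1{12H_\mu}\bigl(q_2(\mu)+|\mu|^2-2|\mu|\bigr)$, $D^2g(\mu)=\frac{2|\mu|}{3H_\mu}$, and $D^3g(\mu)=\frac{2}{3H_\mu}$. Multiplying the truncated identity through by $H_\mu$ and recalling ${f'}_{\lambda/\mu}=2^{|\lambda|-|\mu|-\ell(\lambda)+\ell(\mu)}f_{\lambda/\mu}$ yields
\begin{align*}
\sum_{|\lambda/\mu|=n}\frac{2^{|\lambda|-\ell(\lambda)-|\mu|+\ell(\mu)}f_{\lambda/\mu}H_\mu}{H_\lambda}\sum_{\square\in\lambda}\binom{c_\square+1}{4}
&=\sum_{\square\in\mu}\binom{c_\square+1}{4}+\frac{n}{12}\bigl(q_2(\mu)+|\mu|^2-2|\mu|\bigr)\\
&\qquad+\frac23|\mu|\binom{n}{2}+\frac23\binom{n}{3}.
\end{align*}

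The last step is to absorb the term $-\sum_{\square\in\mu}\binom{c_\square+1}{4}$ that appears in the statement. By \eqref{eq:skewhook} we have $\sum_{|\lambda/\mu|=n}\frac{2^{|\lambda|-\ell(\lambda)-|\mu|+\ell(\mu)}f_{\lambda/\mu}}{H_\lambda}=\frac1{H_\mu}$, so multiplying this identity by $H_\mu\sum_{\square\in\mu}\binom{c_\square+1}{4}$ gives
$$
\sum_{|\lambda/\mu|=n}\frac{2^{|\lambda|-\ell(\lambda)-|\mu|+\ell(\mu)}f_{\lambda/\mu}H_\mu}{H_\lambda}\sum_{\square\in\mu}\binom{c_\square+1}{4}=\sum_{\square\in\mu}\binom{c_\square+1}{4}.
$$
Subtracting this from the previous display cancels the constant term and leaves exactly $\frac23\binom{n}{3}+\frac23|\mu|\binom{n}{2}+\frac1{12}\bigl(q_2(\mu)+|\mu|^2-2|\mu|\bigr)n$, which is the assertion. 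I do not expect any genuine obstacle: once the values of $H_\lambda D^kg$ are granted, everything is routine bookkeeping, and the only point demanding slight care is pinning down the constant term of the polynomial correctly, which is precisely where \eqref{eq:skewhook} enters; the conceptual work has already been done in establishing identities \eqref{eq:ab0}--\eqref{eq:ab2} and Theorem~\ref{th:diffq1}.
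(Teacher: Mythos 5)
Your proposal is correct and takes essentially the same route as the paper: the paper likewise records the four values of $H_\lambda D^k\bigl(\sum_{\square\in\lambda}\binom{c_\square+1}{4}/H_\lambda\bigr)$ for $k=1,2,3,4$ and then invokes Theorem \ref{th:telescope}. Your explicit cancellation of the constant term via \eqref{eq:skewhook} is just the bookkeeping the paper leaves implicit (equivalently, one can absorb the constant $\sum_{\square\in\mu}\binom{c_\square+1}{4}$ into $g$ from the outset, since $D(1/H_\lambda)=0$).
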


\bigskip

\section{Acknowledgments}
The authors thank the referees for carefully reading the paper and for giving helpful comments.
The second author  thanks  Prof. P.-O. Dehaye for the encouragements and helpful suggestions. 
The second author is supported  by Grant [PP00P2\_138906] of the Swiss National Science Foundation.



\bigskip
\bigskip
\bigskip


\end{document}